\newtheorem{theo}{Theorem}[section]
\newtheorem{coro}[theo]{Corollary}
\newtheorem{lemm}[theo]{Lemma}
\newcommand{\Ga}{\Gamma}
\newcommand{\Om}{\Omega}
\newcommand{\si}{\sigma}
\newcommand{\te}{\theta}
\newcommand{\De}{\Delta}
\newcommand{\de}{\delta}
\newcommand{\pa}{\partial}
\newcommand{\R}{{\bf R}^n}
\newcommand{\ri}{\rightarrow}
\newcommand{\Rn}{{\bf R}^{n-1}}
\newcommand{\na}{\nabla}
\newcommand{\RN}{{\bf R}^{n+1}}
\begin{document}
\baselineskip=18pt

\title[Maximum modulus for Stokes equations]{Maximum modulus estimate for the solution of the nonstationary Stokes equations}
\author{TongKeun Chang and Hi Jun Choe}
\address{TongKeun chang: Department of
Mathematics, Yonsei University, 50 Yonsei-ro, Seodaemun-gu, Seoul,
South Korea 120-749 } \email{chang7357@yonsei.ac.kr }
\address{Hi Jun Choe: Department of
Mathematics, Yonsei University, 50 Yonsei-ro, Seodaemun-gu, Seoul,
South Korea 120-749 } \email{choe@yonsei.ac.kr }

\thanks{}

\begin{abstract}
A maximum modulus estimate for the nonstationary Stokes equations in $C^2$ domain is found.
The singular part and regular part of Poisson kernel are analyzed.
The singular part consists of the gradient of single layer potential and  the gradient of composite potential
defined on only normal component of the boundary data.
Furthermore, the normal velocity near the boundary is bounded if the boundary data is bounded.
If the normal component of the boundary data is Dini-continuous
and the tangential component of the boundary data is bounded,
then the maximum modulus of velocity is bounded in whole domain.
\end{abstract}

\maketitle

\noindent {\it Keywords: } Nonstationary Stokes equations, Maximum
modulus, bounded cylinders.

\section{Introduction}
\setcounter{equation}{0}

A maximum modulus estimate of the nonstationary Stokes equations is presented. In the case of the stationary flow, Maremonti and Russo\cite{MRu} obtained a quasi maximum principle and Varnhorn\cite{V} showed a maximum modulus theorem for $C^{1,\alpha}$ domain:
$$
\max_{x\in \Omega} |u| \leq C(\Omega) \max_{x\in \partial\Omega} |u|,
$$
where $u$ is a solution to the stationary Stokes equations in domain $\Omega$. We also note that Maz'ja and Rossmann\cite{MRo} considered the maximum modulus estimate for the stationary Navier-Stokes equations in polygonal domain.
In a canonical domain like ball, Kratz\cite{KR} found the best constant $C(\Omega)$ such that
$$
\max_{x\in B_1} |u| \leq \frac{1}{2}n(n+1) \max_{x\in \partial B_1} |u|,
$$
where $B_1$ is the unit ball in ${\bf R}^n$.
For a more general domain like Lipschitz in ${\bf R}^3$, Shen\cite{Sh2} obtained a maximum modulus estimate and the higher dimension problem is still unresolved.

The maximum modulus estimate of the nonstationary problem is heavily entangled with the structural form of  Poisson kernel and the solvability of the boundary value problem is essential. As a classical result, Solonnikov\cite{So} solved the initial-boundary problem in $C^2$ domain for the isotropic Sobolev spaces and later  he\cite{So1} extended the solvability to the anisotropic Sobolev spaces.
The $L^2$ solvability for the Lipschitz domain was obtained by Shen\cite{Sh1} for any dimension and Choe and Kozono\cite{Ch} considered the case for the mixed norm potential spaces.

To be more specific, we state the nonstationary Stokes equations:
\begin{align}\label{maineq}
\begin{array}{ll}\vspace{2mm}
u_t - \nu \De u + \na p =0 &  \mbox{ in } \Omega \times (0,T),\\ \vspace{2mm}
div \, u =0 &  \mbox{ in } \Omega\times (0,T),\\ \vspace{2mm}
u|_{t=0} = 0 &  \mbox{ in } \Omega, \\
u|_{\pa \Om \times (0,T)}= g   & \mbox{ on } \partial\Omega \times (0,T),
\end{array}
\end{align}
where $\Om $ is $C^2$ bounded connected domain in $\R$ and $0< T < \infty$ and $\nu$ is the viscosity which we assume 1.
In addition, we assume the boundary data $g$ satisfies the compatibility condition:
$$
\int_{\partial\Om} g\cdot N d\sigma = 0
$$
for almost all $t$, where $N$ is the outward unit normal vector on the boundary.
Since nontrivial initial data can be treated by solving homogeneous boundary value problems, we consider only the initial-boundary value problems with zero initial data.

Contrary to the stationary case, the quasi maximum principle fails, namely, there is an unbounded solution whose boundary data is bounded. Heuristically speaking, at the boundary point where the normal component of boundary data has a jump discontinuity along an $(n-2)$-dimensional surface on the boundary passing to it, the tangential component of the velocity blows up in the neighborhood of it. So we can not expect  the quasi maximum modulus theorem like the stationary case.

In this paper, we only consider the case that the space dimension is greater than or equal to 3. Dimension 2 case follows exactly the same path with logarithmic kernels.
Denote $E$ for the fundamental solution to Laplace equation and $\Ga$ for the fundamental solution to heat equation with unit conductivity. For a given boundary point $y \in\partial \Omega$ $N(y)$ is the outward unit normal vector at $y$. We define the $(n-1)$-dimensional convolution
$$
{\bf S} (f)(x) = \int_{\partial \Omega} E(x-y) f(y) d\sigma(y)
$$
for real-value function $f:\R \ri {\bf R}$ which is just the single layer potential of $f$ on $\partial\Omega$. We need a composite kernel.
We define a composite kernel function $\kappa(x,t)$ on $\Om\times (0,T)$ by
\begin{align*}
\kappa (x,t) = & \int_{\pa \Om}    \frac{\partial\Ga}{\partial N(y)}(x-y,t)    E (y)d\sigma(y)
\end{align*}
and a surface  potential ${\bf  T}$ for $f$ by
$$
{\bf T}(f)(x,t) =4 \int_{0}^{t }\int_{ \partial\Om} \kappa(x-y,t-s)  f(y,s) d\sigma(y) ds,
$$
for real-value function $f:\RN \ri {\bf R}$.
We state our main theorem:
For given $x\in \Om$, $\bar{x}$ is the nearest point of $x$ on $\partial\Om$ such that $dist(x,\partial\Om)=|\bar{x}-x|$
and for a vector valued function $v(x)$, we define the normal component and tangential component to the nearest point $\bar{x}$ by
$$
v_N(x) = (v(x)\cdot N(\bar{x})) N(\bar{x}) \quad \mbox{and}\quad v_T(x)= v(x) -  (v(x)\cdot N(\bar{x})) N(\bar{x}).
$$
\begin{theo}\label{theorem1}
Suppose that the domain $\Omega$ is bounded $C^2$ and $u$ is a solution to (\ref{maineq})
for bounded boundary data $g$.
The normal component of the velocity $u_N$ is bounded
and  there is also a constant
$C(\Om)$ such that
$$
\max_{(x,t)\in \Omega\times (0,T)} |u_N (x,t)|
 \leq C(\Omega) \max_{(y,t)\in \partial\Omega\times (0,T) } |g(y,t)|.
$$
Furthermore, the tangential component of the velocity $u$ satisfies that
$$
\max_{(x,t)\in \Omega\times (0,T)} |u_T(x,t)-\nabla {\bf S}(g \cdot N)_T (x,t) -\nabla {\bf T}(g \cdot N)_T (x,t) |
 \leq C(\Omega) \max_{(y,t)\in \partial\Omega\times (0,T) } |g(y,t)|
$$

\end{theo}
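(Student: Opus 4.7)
The plan is to obtain an explicit boundary-integral representation of $u$ via a Poisson-type kernel, decompose that kernel into an explicitly singular part and a regular remainder, and then estimate each piece separately.

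First, appealing to the $C^2$ solvability theory of Solonnikov, I would represent the solution as a convolution of $g$ against the Stokes Poisson kernel on $\partial\Omega\times(0,T)$. Using a $C^2$ partition of unity to flatten the boundary locally, this kernel can be compared to the explicit half-space Poisson kernel of the nonstationary Stokes system. In the half-space $\{x_n>0\}$ the kernel splits naturally into a heat-kernel piece (driven by the Dirichlet heat semigroup) and a pressure-gradient piece $\nabla_x p$ that restores the divergence-free condition at the boundary. The pressure solves a Neumann problem whose data involves only the normal component $g\cdot N$; consequently the full pressure-gradient contribution is of the form $\nabla$ applied to a potential acting only on $g\cdot N$. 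After separating the instantaneous elliptic kernel from the time-convolved heat correction, this term produces exactly $\nabla {\bf S}(g\cdot N)$ and $\nabla {\bf T}(g\cdot N)$ with ${\bf S}$ and ${\bf T}$ as defined in the introduction.

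The heat-kernel piece is a standard Dirichlet Poisson-type operator for the heat equation on a $C^2$ domain, so for bounded boundary data it is uniformly bounded on $\Omega\times(0,T)$ by the maximum principle. This settles the regular remainder and, combined with the decomposition above, yields the tangential estimate by simple subtraction of the two singular pieces. For the normal component $u_N$ one cannot absorb the singular pieces by subtraction, so the key is to show that $N(\bar x)\cdot\nabla {\bf S}(g\cdot N)$ and $N(\bar x)\cdot\nabla {\bf T}(g\cdot N)$ are themselves bounded. For $\nabla {\bf S}$ this is the classical jump relation for the normal derivative of a single layer potential on a $C^2$ surface, which maps $L^\infty$ into $L^\infty$ up to a $\pm\tfrac12$ jump; for $\nabla {\bf T}$ one uses the time-smoothing from $\partial_N\Gamma$ to absorb the extra normal derivative.

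The main obstacle is controlling the non-flat corrections produced by the $C^2$ boundary parametrization. Concretely, one must verify that all error kernels generated by replacing the true boundary by its local tangent plane are weakly singular in the sense that their $L^1$ norms over $\partial\Omega$ are bounded uniformly in $x\in\Omega$ and $t\in(0,T)$. Equally delicate is the precise identification, in the pressure-gradient piece, that the singular operator acts only on $g\cdot N$ and not on the tangential components: this requires careful use of the divergence-free condition and of the Neumann condition for $p$, and any leftover term acting on $g_T$ must be shown to be a regular Poisson-type kernel or to carry an extra derivative that is compensated by smoothing from the heat kernel.
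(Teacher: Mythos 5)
Your high-level decomposition (singular part $\nabla {\bf S}(g\cdot N)+\nabla {\bf T}(g\cdot N)$ plus a remainder with uniform bounds, obtained by comparing the Poisson kernel of $\Omega$ with the half-space kernel) coincides with the paper's skeleton, but the proposal leaves precisely the steps that constitute the proof unestablished, and in one place takes a route that would fail. In the half space, after removing $-2\delta_{ij}D_{x_n}\Gamma$, the instantaneous term $\delta_{jn}\delta(t)D_{x_i}E$ and the composite term $B_{in}=D_{x_i}\kappa$, the remainder is the family $L_{ij}$ coming from the $E*\Gamma$ correction in the Green matrix; this is not a Dirichlet heat Poisson operator and no maximum principle applies to it. The core of the half-space result is the uniform-in-$x_n$ bound $\int_0^T\int_{{\bf R}^{n-1}}|L_{ij}|\,dx'\,dt\le C$ for $1\le i\le n$, $1\le j\le n-1$ (Lemma \ref{L-1-norm}, proved via the representation \eqref{0920} and the region-splitting estimates of Lemmas \ref{lemma1}--\ref{lemma2}), together with the identities $L_{in}=L_{ni}+B_{in}$ and $\sum_i L_{ii}=-2D_{x_n}\Gamma$, which are exactly what shows that the only singular operators act on $g_n$; you list this identification as an ``obstacle'' without supplying a mechanism. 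Likewise, for the $C^2$ domain the paper does not perform a direct kernel comparison with ``weakly singular errors'': it flattens by $\mu(y)=(y',y_n-\Phi(y'))$, forms the perturbation $(J,\eta)=(G^\Omega-\mu G,\,\theta^\Omega-\mu\theta)$, verifies that the data $R$, $S$, $S_t$ lie in $L^p$-based spaces for $p\in(1,\frac{n+2}{n+1})$, and uses Solonnikov's anisotropic estimate plus the trace theorem to get a uniform $L^1$ bound of $\nabla J$ and $\eta$ on $(\partial\Omega\cap B_r)\times(0,T)$; the interior bound requires a separate argument (Shen's $L^2$ layer-potential theory together with the time-uniform invertibility of $-\frac12{\bf I}+{\bf K}$, Lemma 4.1). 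Asserting that the flattening errors have uniformly bounded $L^1$ norms is stating the conclusion, not proving it.

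The treatment of the normal component is the step that would actually fail as written. You propose to show that $N(\bar x)\cdot\nabla{\bf T}(g\cdot N)$ is bounded by ``time-smoothing''; this is unsubstantiated (${\bf T}(g_n)$ is essentially the caloric extension of the boundary single layer of $g_n$, and the normal derivative of such an extension of merely bounded data is not obviously, and is never claimed in the paper to be, bounded), and it is also unnecessary: in the paper the composite kernel never enters $u_N$, because $B_{nn}=0$ and $L_{nn}=-\sum_{i\le n-1}L_{ii}-2D_{x_n}\Gamma$ is already $L^1$-bounded, while the single-layer contribution to $u_N$ is $D_{x_n}{\bf S}(g_n)$, i.e.\ the harmonic Poisson kernel of the half space, bounded by the maximum principle. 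The normal-component estimate should be run through these algebraic identities (plus the $|e_n-N(y)|\le C|y'|$ correction when passing from $g_n$ to $g\cdot N$ on the curved boundary), not through jump relations for $\nabla{\bf S}$ and a hoped-for $L^\infty$ bound for $N\cdot\nabla{\bf T}$.
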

Define the modulus of continuity of $f$ at $x$  by $\omega(f)(r,x)=\sup_{y\in B_r(x)\cap \Omega} |f(y)-f(x)|$ and we say $f$ is Dini-continuous in $\Omega$  if
$$
|| f||_{Dini,\Omega} =\sup_{x\in \Omega} \int_{0}^{r_0} \omega(f)(r,x) \frac{dr}{r} < \infty
$$
for an $r_0>0$.
From a direct computation, we have $\nabla{\bf S}(f)$ and  $\nabla {\bf T}(f)$ are bounded if $f$ is Dini-continuous on $\partial \Omega$ and
we obtain a maximum modulus estimate:
\begin{coro}\label{theorem2}
Suppose that the domain $\Omega$ is bounded $C^2$ and $u$ is a solution to (\ref{maineq}).
Suppose $g$ is bounded on $\partial\Omega$ and the normal component $g_N$ is Dini-continuous.
Then, there is a constant $C(\Om)$ depending only on $\Omega$ such that
$$
\max_{(x,t)\in \Omega\times (0,T) } |u(x,t) | \leq C( \Omega) ( \max_{(y,t)\in  \partial\Omega\times (0,T)}
 |g(y,t)| +||g_N||_{Dini,\partial\Omega} ).
$$
\end{coro}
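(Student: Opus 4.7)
The plan is to deduce Corollary~\ref{theorem2} directly from Theorem~\ref{theorem1} together with two classical potential-theoretic bounds, namely that both $\nabla{\bf S}(f)$ and $\nabla{\bf T}(f)$ are uniformly bounded on $\Om$ and on $\Om\times(0,T)$ respectively whenever the scalar density $f$ is bounded and Dini-continuous on $\partial\Om$. Concretely, splitting $u=u_N+u_T$ and applying Theorem~\ref{theorem1} with the choice $f=g\cdot N$ (a scalar function on $\partial\Om\times(0,T)$) gives, after the triangle inequality,
$$
|u(x,t)| \le C(\Om)\max|g| + |\nabla{\bf S}(g\cdot N)(x,t)| + |\nabla{\bf T}(g\cdot N)(x,t)|,
$$
so the corollary is reduced to showing
$$
\sup_{x\in\Om}|\nabla{\bf S}(f)(x)| + \sup_{(x,t)\in\Om\times(0,T)}|\nabla{\bf T}(f)(x,t)| \le C(\Om)\bigl(\|f\|_\infty + \|f\|_{Dini,\partial\Om}\bigr).
$$

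For the single-layer gradient I would use the standard subtraction trick around the nearest boundary point $\bar x$ of $x\in\Om$: writing
$$
\nabla{\bf S}(f)(x) = f(\bar x)\int_{\partial\Om}\nabla_x E(x-y)\,d\sigma(y) + \int_{\partial\Om}\nabla_x E(x-y)\bigl(f(y)-f(\bar x)\bigr)d\sigma(y),
$$
the first integral is uniformly bounded on $C^2$ domains (its tangential part being essentially a double layer of the constant $1$ and its normal part a single-layer gradient of $1$, both of which are classical), while the second is estimated by a dyadic decomposition of $\partial\Om$ around $\bar x$ together with the bound $|\nabla_x E(x-y)|\le C|x-y|^{1-n}$, yielding a control of the form $C\int_0^{r_0}\om(f)(r,\bar x)\frac{dr}{r}\le C\|f\|_{Dini,\partial\Om}$.

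For the composite gradient $\nabla{\bf T}(f)$ the same scheme applies but with the subtraction $f(y,s)=f(\bar x,s)+\bigl(f(y,s)-f(\bar x,s)\bigr)$ performed inside the double integral, so that only the spatial Dini modulus of $f$ is required. The first piece then collapses to a multiple of $\int_0^t\int_{\partial\Om}\nabla_x\kappa(x-y,t-s)d\sigma(y)\,ds$ times $f(\bar x,s)$, while the second is again absorbed into $\|f\|_{Dini,\partial\Om}$ by the dyadic boundary argument. The main obstacle is establishing the uniform pointwise-in-$(x,t)$ bound on $\int_0^t\int_{\partial\Om}|\nabla_x\kappa(x-y,t-s)|d\sigma(y)\,ds$; this requires a delicate interplay between the parabolic decay of $\pa_{N}\Ga$ and the elliptic decay of $E$ along the $C^2$ surface, distinguishing the near-singularity regime $|x-y|^2\lesssim t-s$ from the Gaussian-tail regime. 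Once this kernel bound is in place -- and the structure should be inherited from the kernel analysis already carried out in Theorem~\ref{theorem1} -- the argument concludes by combining the two potential estimates with the normal-component bound from Theorem~\ref{theorem1}.
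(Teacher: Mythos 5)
Your overall plan is the same one the paper uses: apply Theorem~\ref{theorem1} to reduce the corollary to showing that the two singular potentials $\nabla{\bf S}(g\cdot N)$ and $\nabla{\bf T}(g\cdot N)$ are uniformly bounded, then exploit the Dini modulus of the scalar density $g\cdot N$. The paper's own proof is extremely terse (two displayed kernel bounds with $|y'|^{1-n}$ and the one-line remark ``the boundedness follows from the Dini-continuity''), so your explicit subtraction at the nearest boundary point $\bar x$ and the dyadic decomposition around $\bar x$ actually make the argument more honest than the paper's literal statement, which, without the subtraction, pairs $|y'|^{1-n}$ with a merely bounded density and would diverge.

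There is, however, one genuine slip that must be fixed. You write that the ``main obstacle'' is establishing a uniform pointwise-in-$(x,t)$ bound on
\[
\int_0^t\int_{\partial\Om}|\nabla_x\kappa(x-y,t-s)|\,d\sigma(y)\,ds.
\]
This is not achievable, and is in fact precisely the reason the Dini hypothesis on $g_N$ is required: Theorem~\ref{theorem1} isolates $\nabla{\bf T}(g\cdot N)$ because its tangential gradient kernel (equivalently $B_{in}=D_{x_i}\kappa$, $i\ne n$, in the half-space model) does \emph{not} have a uniformly bounded $L^1$ norm on $\partial\Om\times(0,T)$ as $x_n\to 0$. If the quantity you ask for were finite, the conclusion would already hold for merely bounded $g$ and the Dini hypothesis would be superfluous. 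What the argument actually requires is two distinct estimates that you should state separately: (i) a cancellation estimate for the constant piece, namely
$\bigl|\int_0^t\int_{\partial\Om}\nabla_x\kappa(x-y,t-s)\,d\sigma(y)\,ds\bigr|\le C$
uniformly in $(x,t)$, which uses the odd/symmetric structure of the tangential kernel on the nearly flat $C^2$ boundary (the same mechanism you invoke for the $\nabla E$ piece); and (ii) a pointwise-in-$y$, time-integrated kernel bound
$\int_0^T|\nabla_x\kappa(x-y,\tau)|\,d\tau\le C\,|\bar x-y|^{1-n}$,
which, after the dyadic decomposition in $|y-\bar x|$, pairs with $\omega(g\cdot N)(2^k\rho,\bar x)$ and sums to $\|g_N\|_{Dini,\partial\Om}$. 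The estimate (ii) follows from the inequality (3.2) for $A$ (and hence for $\kappa$). The parallel pair of estimates, cancellation plus $|\nabla_x E(x-y)|\le C|\bar x-y|^{1-n}$, is what handles the elliptic single-layer gradient, exactly as you describe. Once the ``main obstacle'' sentence is replaced by this two-part estimate, the proposal is correct and matches the paper's route.

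A small terminological point: the tangential components of $\nabla_x E(x-y)$, $y\in\partial\Om$, are not a ``double layer of the constant $1$''; the boundedness of $\int_{\partial\Om}\nabla_T E(x-y)\,d\sigma(y)$ for a $C^2$ domain instead comes from the odd-kernel cancellation together with the $O(|y'|^2)$ deviation of the boundary from its tangent plane, which is what kills the otherwise nonintegrable $|y'|^{1-n}$ singularity.
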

 As a separate interest, we obtain an improved $L^2$ theory like Lemma 4.1.  When the $L^2(\partial\Om)$ norm of the boundary data is bounded in time, then $||u(\cdot,t)||_{L^2(\Om)}$  is bounded in time. Consequently, the local boundedness holds too(see Corollary 4.2.).

\section{Kernels on  half plane}
\setcounter{equation}{0}

To study the equation \eqref{maineq}, we consider the case of $\Om = \R_+ = \{ (x^{\prime},x_n) \in \R \, | \, x^{\prime} \in \Rn, \,\, 0 < x_n < \infty \}$ and for the notational simplicity we set $D_{x_i}=\frac{\partial}{\partial x_i}$ and double indices means summation up to $n$. For notation,  we denote $x=(x^{\prime},x_n)$, that is, $x^{\prime}=(x_1,x_2,\cdot\cdot\cdot,x_{n-1})$. Indeed, the symbol $ {\prime}$ means the coordinate up to $n-1$ and
 $\omega_n$ is the surface area of the unit sphere in ${\bf R}^n$.

 We let $\Gamma$ be the fundamental solution to the heat equation such that
\begin{align*}
\Gamma(x,t)  = \left\{\begin{array}{ll} \vspace{2mm}
\frac{1}{\sqrt{2\pi t}^n} e^{ -\frac{|x|^2}{2t} }, &   t > 0\\
  0, & t \leq 0
\end{array}
\right.
\end{align*}
and $H$ be the Newtonian potential of $\Ga$  such that
$$
H(x,t) = \int_{{\bf R}^n} \Ga(y,t) E(x-y) dy.
$$

The Stokes fundamental matrix $(F,\gamma)$ for ${\bf R}^n, n \geq 3$  is
\begin{align*}
F_{ij}(x,t) &= \delta_{ij} \Ga(x,t)  + \frac{1}{(n-2)\omega_n} D_{x_i}D_{x_j} H(x,t) \\
\gamma_i &= \frac{ \delta(t)}{\omega_n} \frac{x_i}{ |x|^n},
\end{align*}
where $\delta(t)$ is the Dirac delta function and $\delta_{ij}$ is the Kronecker delta function.

The Green's matrix $(G,\theta)$ for the half space ${\bf R}^n_{+}$ is
\begin{align*}
G_{ij}(x,y,t) &= \delta_{ij}\big( \Ga(x-y,t) -\Ga(x-y^*,t)\big)\\
& +4(1-\delta_{jn}) D_{x_j} \int_{0}^{x_n} \int_{{\bf R}^{n-1}} D_{x_i} E(x-z) \Ga(z-y^{*},t) dz\\
\theta_j(x,y,t) &=(1-\delta_{jn})\left( \int_{{\bf R}^{n-1}} D_{x_i} E(x^{\prime}-z^{\prime},x_n)
 \Ga(z^{\prime}-y^{\prime},y_n,t) dz^{\prime}\right.\\
&\left. +  \int_{{\bf R}^{n-1}}E(x^{\prime}-z^{\prime},x_n)  D_{y_n}
 \Ga(z^{\prime}-y^{\prime},y_n,t) dz^{\prime}\right) ,
\end{align*}
where we denote $x^{*}=(x^{\prime},-x_n)$.

The Poisson kernel $(K,\pi ) $ for the half space is defined by
\begin{align*}
\begin{array}{ll}\vspace{2mm}
K_{ij}(x'-y',x_n,t) & = \frac{\pa G_{ij}(x,y,t)}{\pa y_n}|_{y_n =0} - \de_{jn} \te_i(x,y,t)|_{y_n =0}\\ \vspace{2mm}
& =  -2 \delta_{ij} D_{x_n}\Ga(x'-y', x_n,t)  +4L_{ij} (x'-y',x_n,t) \\ \vspace{2mm}
& \quad-  \de_{jn} \de(t)  D_{x_i} E(x'-y',x_n),\\ \vspace{2mm}
\pi_j (x'-y',x_n,t )&=-2\delta(t) D_{x_j}D_{x_n}E(x'-y',x_n)+4 D_{x_n}D_{x_n}A(x'-y',x_n,t)\\
&\quad+4D_{t} D_{x_j}A(x'-y',x_n,t),
\end{array}
\end{align*}
where we defined  that
\begin{align*}
 {L}_{ij} (x,t) & =  D_{x_j}\int_0^{x_n} \int_{\Rn}   D_{z_n}    \Ga(z,t) D_{x_i}   E(x-z)  dz,\\
 A(x,t)&=\int_{\Rn}\Ga(z^{\prime},0,t)E(x^{\prime}-z^{\prime},x_n)dz^{\prime}.
\end{align*}

$L_{ij}$ and $A$ satisfy the estimates
\begin{align}\label{inequality11}
|D^{l_0}_{x_n} D^{k_0}_{x'} D_{t}^{m_0} L_{ij}(x,t)|
& \leq  \frac{c}{t^{m_0 + \frac12} (|x|^2 +t )^{\frac12 n + \frac12 k_0} (x_n^2 +t)^{\frac12 l_0}},\\
|D^j_xD^m_tA(x,t)|&\leq \frac{c}{t^{m+\frac{1}{2}}(|x|^2+t)^{\frac{n-2+|j|}{2}}},
\end{align}
where $ 1 \leq  i \leq n$ and $1 \leq j \leq n-1$ (see \cite{K} and \cite{So}).
The estimates \eqref{inequality11} of $L_{ij}$ and the estimate of the fundamental solution to heat equation $\Gamma$ imply that
\begin{align}\label{inequality1}
|D^{l_0}_{x_n} D^{k_0}_{x'} D_{t}^{m_0} K_{ij}(x,t)|
& \leq  \frac{c}{t^{m_0 + \frac12} (|x|^2 +t )^{\frac12 n + \frac12 k_0} (x_n^2 +t)^{\frac12 l_0}}.
\end{align}

The solution $(u, p)$ of the Stokes system \eqref{maineq} in $\Om = {\bf R}^n_+$ with boundary data $g$ is expressed by
\begin{align}\label{representation}
\begin{array}{ll}\vspace{2mm}
u^i(x,t) &= \sum_{j=1}^{n}\int_0^t \int_{\Rn}
K_{ij}( x^{\prime}-y^{\prime},x_n,t-s)g_j(y^{\prime},s) dy^{\prime}ds,\\
p(x,t) &= \sum_{j=1}^{n}\int_0^t \int_{\Rn} \pi_j(x^{\prime}-y^{\prime},x_n,t-s) g_j(y^{\prime},s) dy^{\prime}ds.
\end{array}
\end{align}

We have relations among $L$ and $A$ such that
\begin{align} \label{1006-3}
\sum_{1 \leq i \leq n} L_{ii} = -2D_{x_n} \Ga, \qquad  L_{in} = L_{ni}  + B_{in},
\end{align}
where $ B_{in}(x,t) =  \int_{{\bf R}^{n-1}}D_{x_n}
\Ga(x^\prime -y^\prime , x_n, t) D_{y_i} E( y^{\prime},0) dy^\prime =  \frac{\partial}{x_i} \kappa (x,t)
$ if $ i\neq n$ and $B_{nn}=0$.

For further computation, we introduce Lebesgue spaces and Sobolev spaces:
$$
L^p (\Om) = \{ f ; \int_{\Om} |f|^p dx < \infty\}, \quad
W^{1,p}(\Om)=\{ f; \int_\Om |f|^p +|\nabla f|^p dx < \infty\},
$$
$$
L^p(0,T;W^{1,p}(\Om)) =\{f ; \int_0^T  \int_\Om |f|^p +|\nabla f|^p dx  dt < \infty\}.
$$

\section{Maximum modulus estimate in the half space}
\setcounter{equation}{0}
In this section, we consider the maximum modulus estimate in the half space. The normal derivative $D_{x_n}\Gamma$
has uniformly bounded $L^1$ norm with respect to $x_n$ on $\partial{\bf R}^{n}_{+} \times (0,T)$(see \eqref{1006-2}) and hence we focus only on the kernel function $L_{ij}$. By introducing a composite kernel $\kappa$ we are able to identify the singular kernels.
The following lemma is a key stone for the maximum modulus estimate.
\begin{lemm}\label{L-1-norm}
Let $1 \leq i \leq n$ and $1 \leq j \leq n-1$. Then
\begin{align}\label{0126-1}
\int_0^\infty \int_{{\bf R}^{n-1}} |L_{ij}(x',x_n,t)| dx'dt < C,
\end{align}
where $C>0$ is independent of $x_n >0$ and hence  it follows that
\begin{align}\label{0126-2}
\int_0^\infty \int_{{\bf R}^{n-1}} |L_{in}(x',x_n,t) - B_{in}(x^\prime,x_n,t)| dx'dt < C,
\end{align}
where $C>0$ is independent of $x_n >0$.
\end{lemm}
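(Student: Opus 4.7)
First establish \eqref{0126-1}, and then derive \eqref{0126-2} from it using the algebraic identity \eqref{1006-3}. For \eqref{0126-2}: if $1 \leq i \leq n - 1$, then $L_{in} - B_{in} = L_{ni}$ by \eqref{1006-3}, and $L_{ni}$ satisfies \eqref{0126-1} since its second index $i \leq n-1$. If $i = n$, then $B_{nn} = 0$, and the trace identity $\sum_{k=1}^n L_{kk} = -2 D_{x_n}\Gamma$ (from \eqref{1006-3}) lets one write $L_{nn}$ as a finite sum of $L^1$-integrable functions, invoking \eqref{0126-1} for the tangential diagonals $L_{kk}$ ($1 \leq k \leq n-1$) together with the $L^1$-integrability of $D_{x_n}\Gamma$ that will appear in \eqref{1006-2}.

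\textbf{Scaling reduction.} The pointwise bound \eqref{inequality11} shows that $L_{ij}$ is parabolic-homogeneous of degree $-(n+1)$, i.e.\ $L_{ij}(\lambda x, \lambda^2 t) = \lambda^{-n-1} L_{ij}(x, t)$. A change of variables $x' = \lambda y'$, $t = \lambda^2 s$ gives
\[ \int_0^\infty \int_{\Rn}|L_{ij}(x', x_n, t)|\, dx'\, dt = \int_0^\infty \int_{\Rn}|L_{ij}(y', x_n/\lambda, s)|\, dy'\, ds \]
for every $\lambda > 0$. Hence the integral is independent of $x_n$, and it suffices to bound it at $x_n = 1$.

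\textbf{Main estimate.} For $1 \leq j \leq n - 1$, commute $D_{x_j}$ past the integral defining $L_{ij}$, using $D_{x_j}E(x-z) = -D_{z_j}E(x-z)$, to obtain
\[ L_{ij}(x, t) = \int_0^{x_n}\int_{\Rn} D_{z_n}\Gamma(z, t)\, D_{x_i}D_{x_j}E(x-z)\, dz. \]
I would then split the $(x', t)$-domain at $x_n = 1$ into three regions. In the local region $|x'|^2 + t \lesssim 1$, the bound \eqref{inequality11} gives a finite contribution directly. In the tail region $|x'| > \sqrt{t}$, Taylor-expanding $D_{x_i}D_{x_j}E(x-z)$ at $z = 0$ kills the $z_k$-linear terms ($k \leq n-1$) by tangential parity of $\Gamma$, yielding an improved bound of order $|L_{ij}(x,t)| \lesssim 1/(t^{3/2}|x|^n)$, which is integrable over $\{|x'|>\sqrt{t}\}\times(0,\infty)$. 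In the near-axis region $|x'|\leq \sqrt{t}$ with $t \geq 1$, the parabolic rescaling $y = x/\sqrt{t}$ converts the integral to $t^{-1}\int_{|y'|\leq 1}|L_{ij}(y', 1/\sqrt{t}, 1)|\, dy'$; the boundary vanishing $D_{z_n}\Gamma(\cdot, 0, \cdot) = 0$ together with the shrinking $z_n$-integration over $[0, 1/\sqrt{t}]$ forces $|L_{ij}(y', y_n, 1)| = O(y_n^2)$ uniformly on the unit ball in $y'$ (via a short Fourier or Taylor computation in $z_n$), producing the $O(t^{-2})$ decay needed.

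\textbf{Main obstacle.} The core difficulty is extracting decay finer than what \eqref{inequality11} alone provides, since a naive application of that pointwise bound yields only a logarithmically divergent $L^1$-estimate in the tangential direction. The necessary cancellations come from the tangential derivative structure of $L_{ij}$ for $j \leq n-1$ (parity of $\Gamma$ in the $z'$-variables) and from the boundary vanishing of the normal heat-kernel flux $D_{z_n}\Gamma$ at $z_n = 0$; tracking both uniformly in the delicate tail and near-axis regions is the key technical hurdle.
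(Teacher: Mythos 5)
Your overall architecture is sound and partly matches the paper: the derivation of \eqref{0126-2} from \eqref{0126-1} via the identities \eqref{1006-3} (using $L_{in}-B_{in}=L_{ni}$ for $i\le n-1$, and $L_{nn}=-\sum_{i<n}L_{ii}-2D_{x_n}\Gamma$ together with \eqref{1006-2} for $i=n$) is exactly the paper's argument, and your scaling observation that the integral over ${\bf R}^{n-1}\times(0,\infty)$ is independent of $x_n$ is a legitimate simplification of the paper's ``uniform in $x_n$'' bookkeeping. The gaps are in the main estimate. First, the representation you build on, $L_{ij}=\int_0^{x_n}\int_{{\bf R}^{n-1}}D_{z_n}\Gamma(z,t)\,D_{x_i}D_{x_j}E(x-z)\,dz$, is not absolutely convergent: near the corner point $z=(x',x_n)$ the integrand behaves like $|x-z|^{-n}$ over a half-ball, which diverges logarithmically, so all subsequent absolute-value estimates (Taylor expansion with remainder, the $O(y_n^2)$ claim) are built on an integral that must first be reinterpreted. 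The paper avoids this by moving the tangential derivative onto the Gaussian instead, i.e.\ the representation \eqref{0920}, which keeps only the first derivative $D_{y_i}E$ (locally integrable) under the integral sign; you would need to do the same before any pointwise estimation.

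Second, two of your claimed bounds fail or are unproven as stated. (i) The tail bound $|L_{ij}(x,t)|\lesssim t^{-3/2}|x|^{-n}$ is not integrable over $\{|x'|>\sqrt t\}\times(0,\infty)$: on the sub-region $t\le 1$, $|x'|\ge 1$ (which your local region $|x'|^2+t\lesssim 1$ does not cover) it gives $\int_0^1 t^{-3/2}\,dt=\infty$; moreover for $t\lesssim x_n^2$ the zeroth-order term carries the factor $t^{-1/2}(1-e^{-x_n^2/2t})\sim t^{-1/2}$, not $t^{-3/2}$, so the stated bound is not even of the right form there. The repair is the paper's split at $t=x_n^2$: for $t\le x_n^2$ the crude bound \eqref{inequality11} already gives a uniform constant (this is \eqref{0920-8}), and the cancellation argument is only needed for $t\ge x_n^2$. (ii) The near-axis step is asserted, not proved: the uniform pointwise bound $|L_{ij}(y',y_n,1)|=O(y_n^2)$ on $|y'|\le 1$ does not follow from ``$D_{z_n}\Gamma$ vanishes at $z_n=0$'' plus a short computation — a naive estimate of the tangential convolution produces singular factors of the type $|y'|^{-1}$ or $\log(1/|y_n-z_n|)$, so at best one gets the integrated statement $\int_{|y'|\le1}|L_{ij}(y',y_n,1)|\,dy'\lesssim y_n^2$, and obtaining even that requires pointwise bounds on $\int_{{\bf R}^{n-1}}D_{y_j}\Gamma'(y',t)D_{y_i}E(x'-y',y_n)\,dy'$ that are uniform in the $y_n$-parameter and exploit cancellation (vanishing integrals of derivatives, mean value theorem) in the four regions $|x'-y'|\le\frac12|x'|$, $\frac12|x'|\le|y'|\le2|x'|$, $|y'|\le\frac12|x'|$, $|y'|\ge2|x'|$. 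That is precisely the content of Lemmas \ref{lemma1} and \ref{lemma2}, which constitute the bulk of the paper's proof of \eqref{0126-1}; your proposal replaces this core work with an unsubstantiated appeal to ``a short Fourier or Taylor computation in $z_n$,'' and the same omission affects your tail region, where the Taylor expansion about $z=0$ is only valid on $|z'|\le\frac12|x'|$ and the remaining $z'$-regions must be estimated separately.
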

The maximum modulus theorem for the half space follows from Lemma \ref{L-1-norm}.
\begin{theo}
Let $g= (g_1, g_2, \cdots,  g_n) \in L^\infty(\partial{\bf R}^{n}_{+} \times ( 0, T ))$ and $(u,p)$ is
represented by \eqref{representation}. Then,
\begin{align}
\| u_T - \nabla {\bf S}_T (g_n) - \nabla {\bf T}_T (g_n) \|_{L^\infty (\R_+ \times (0, T))} \leq C \| g\|_{L^\infty(\partial{\bf R}^{n}_{+} \times (0,T))}
\end{align}
for some $C> 0$.
Furthermore, the normal component of the velocity $u$ is bounded
and  there is also a constant
$C$ such that
$$
\max_{(x,t)\in {\bf R}^n_+\times (0,T)} |u_n(x,t)|
 \leq C \max_{(y,t)\in \partial{\bf R}^{n}_+ \times (0,T) } |g(y,t)|.
$$
\end{theo}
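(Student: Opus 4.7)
The plan is to expand the representation \eqref{representation} using the three-piece decomposition
\[
K_{ij}(x',x_n,t)=-2\de_{ij}D_{x_n}\Ga(x',x_n,t)+4L_{ij}(x',x_n,t)-\de_{jn}\de(t)D_{x_i}E(x',x_n)
\]
and classify each piece as either (i) admitting an $L^1$ bound on ${\bf R}^{n-1}\times(0,\infty)$ uniform in $x_n>0$ (hence a bounded map from $L^\infty$ to $L^\infty$ by convolution), or (ii) genuinely singular and in need of explicit extraction via $\na{\bf S}$ and $\na{\bf T}$. A short scaling computation gives $\int_0^\infty\!\int_{{\bf R}^{n-1}}|D_{x_n}\Ga(x',x_n,t)|\,dx'\,dt\leq C$ uniformly in $x_n>0$, which together with Lemma \ref{L-1-norm} controls the $-2\de_{ij}D_{x_n}\Ga$ piece and every $L_{ij}$ in the admissible range $1\leq i\leq n$, $1\leq j\leq n-1$.

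For the normal component, $K_{nj}g_j=4L_{nj}g_j$ with $j<n$ is absorbed at once. The diagonal entry $L_{nn}$ lies outside the scope of Lemma \ref{L-1-norm}, so I invoke the trace identity $\sum_{i=1}^{n}L_{ii}=-2D_{x_n}\Ga$ from \eqref{1006-3} to substitute $L_{nn}=-2D_{x_n}\Ga-\sum_{i<n}L_{ii}$, reducing $4L_{nn}$ to $L^1$-controlled kernels. The remaining $-\de(t)D_{x_n}E$ piece contributes
\[
-\int_{{\bf R}^{n-1}}D_{x_n}E(x'-y',x_n)\,g_n(y',t)\,dy',
\]
which, up to a positive constant, is the harmonic Poisson extension of $g_n$ into the half-space and is therefore bounded by $C\|g\|_\infty$ via the maximum principle. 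This yields the stated estimate on $u_n$.

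For the tangential components ($i<n$), the pieces with $j<n$ are again absorbed as above. The sole obstruction is $K_{in}g_n$ with $i<n$, handled via the second identity $L_{in}=L_{ni}+D_{x_i}\kappa$ from \eqref{1006-3}: the $L_{ni}$ part is now admissible in Lemma \ref{L-1-norm} (first index $n$, second index $i\leq n-1$), while the convolutions of $4D_{x_i}\kappa$ and of $\de(t)D_{x_i}E$ against $g_n$ on the boundary are, by the very definitions of ${\bf T}$ and ${\bf S}$, exactly $D_{x_i}{\bf T}(g_n)$ and $D_{x_i}{\bf S}(g_n)$. Transferring these two non-$L^1$ pieces to the left-hand side produces the bound on $u_T-\na{\bf S}_T(g_n)-\na{\bf T}_T(g_n)$.

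The main obstacle is precisely that $L_{nn}$ and $L_{in}$ sit outside the range of Lemma \ref{L-1-norm}, so a naive kernel-$L^1$ bound fails exactly on the normal-to-normal and tangential-to-normal channels. The two structural identities in \eqref{1006-3} resolve this by recombining the bad kernels into either admissible $L^1$ pieces or the explicit composite potential $D_{x_i}\kappa$ and single-layer $E$ on the boundary. The final subtlety is the dichotomy between the two cases: the boundary operator $\int D_{x_n}E(\cdot)\,dy'$ happens to be (a multiple of) the half-space harmonic Poisson kernel, so no explicit correction is needed for $u_n$, whereas $\int D_{x_i}E(\cdot)\,dy'$ for $i<n$ is unbounded on $L^\infty$, which is exactly why the theorem must extract $\na{\bf S}_T(g_n)$ and $\na{\bf T}_T(g_n)$ as singular correctors for $u_T$ rather than absorb them.
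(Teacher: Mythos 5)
Your proof is correct and follows essentially the same route as the paper: it uses the same three-piece decomposition of $K_{ij}$, the same uniform $L^1$ bounds from the scaling computation \eqref{1006-2} and Lemma \ref{L-1-norm}, and the same two identities in \eqref{1006-3} to recombine the out-of-range kernels $L_{nn}$ and $L_{in}$ (for $i<n$) into admissible $L^1$ pieces plus the explicit singular correctors $\nabla{\bf S}(g_n)$ and $\nabla{\bf T}(g_n)$. Your closing remark on the dichotomy between $D_{x_n}E$ (the bounded harmonic Poisson kernel) and $D_{x_i}E$ for $i<n$ (unbounded on $L^\infty$, hence the need for extraction) matches the paper's treatment of the normal versus tangential components.
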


To show the $L^1$ boundedness of $L_{ij}$, we note that
\begin{align} \label{0920}
 L_{ij}   (x,t) =2^\frac32 \pi^\frac12 \int_0^{x_n} t^{-\frac32} y_n e^{-\frac{|y_n|^2}{t}}
 \int_{{\bf R}^{n-1}}  D_{y_j}   \Ga^{\prime}(y^{\prime},t)
 D_{y_i} E(x^{\prime}-y^{\prime}, x_n - y_n)dy^{\prime} dy_n,
\end{align}
where  $\Ga^{\prime}$ is Gaussian kernel in $\Rn$.

\begin{lemm}\label{lemma1}
For $1 \leq j \leq n-1$, we get
\begin{align}\label{0119-1}
\begin{array}{l}  \vspace{2mm}
|\int_{|x'-y'| \leq \frac12 |x'|} D_{y_j}\Ga^{\prime} (y',t)  D_{y_n} E(x'-y',y_n)  dy'|
 \leq C t^{-\frac{n-1}2} e^{-\frac{|x'|^2}t } |x'|^{-1}
         + C  t^{-\frac{n}2 -\frac12}|x' |e^{-\frac{|x'|^2}{t}}\\ \vspace{2mm}
|\int_{\frac12 |x'| \leq |y'| \leq 2|x'|, |x'-y'| \geq \frac12 |x'|} D_{y_j}
                 \Ga^{\prime} (y',t)  D_{y_n} E(x'-y',y_n) dy'|
 \leq C t^{-\frac{n}2 -\frac12}|x'| e^{-\frac{|x'|^2}{t}},\\ \vspace{2mm}
|\int_{|y'| \leq \frac12 |x'|}D_{y_j}\Ga^{\prime} (y',t)  D_{y_n} E(x'-y',y_n) dy'|
\leq  C |x'|^{-n} \int_{|y'| \leq \frac12\frac{|x'|}{\sqrt{t}}}   |y'|^2 e^{-|y'|^2} dy'\\
             |\int_{|y'| \geq 2 |x'|}  D_{y_j}\Ga^{\prime} (y',t)  D_{y_n} E(x'-y',y_n)  dy'|
\leq C t^{-\frac{n}2  } \int_{\frac{2 |x'|}{\sqrt{t}} \leq |y'| }|y'|^{-n +2}  e^{-|y'|^2} dy',
\end{array}
\end{align}
where $C> 0$ is independent of $x', y_n$ and $t$.
\end{lemm}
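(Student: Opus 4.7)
The plan is to bound the integral separately over each of the four regions, matching the four summands in \eqref{0119-1}. Two basic ingredients enter every estimate: the uniform-in-$y_n$ bound $|D_{y_n}E(x'-y',y_n)|\leq C|x'-y'|^{1-n}$, obtained by maximizing $s\mapsto s/(r^2+s^2)^{n/2}$ over $s>0$, and the Gaussian pointwise bound $|D_{y_j}\Ga^{\prime}(y',t)|\leq C|y'|t^{-(n+1)/2}e^{-c|y'|^2/t}$.

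Regions (b) and (d) only require pointwise estimation. In region (b), $|y'|\sim|x'|$ gives $|D_{y_j}\Ga^{\prime}|\leq C|x'|t^{-(n+1)/2}e^{-c|x'|^2/t}$, $|D_{y_n}E|\leq C|x'|^{1-n}$, and the annulus has volume $\sim|x'|^{n-1}$, producing the stated bound. In region (d), $|x'-y'|\geq|y'|/2$ gives $|D_{y_n}E|\leq C|y'|^{1-n}$, and substituting $z=y'/\sqrt t$ in $\int|y'|^{2-n}t^{-1}\Ga^{\prime}(y',t)\,dy'$ converts the integral directly into the stated $t^{-n/2}\int_{|z|\geq 2|x'|/\sqrt t}|z|^{2-n}e^{-|z|^2}\,dz$.

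Region (c) would only give $|x'|^{1-n}$ from a naive pointwise bound, but the stated bound has $|x'|^{-n}$. The missing power comes from the cancellation
\[
\int_{|y'|\leq R} D_{y_j}\Ga^{\prime}(y',t)\,dy' \;=\; \int_{|y'|=R}\Ga^{\prime}(y',t)\,\frac{y_j}{R}\,dS(y') \;=\; 0,
\]
which follows from the divergence theorem and the radial symmetry of $\Ga^{\prime}$. I would split $D_{y_n}E(x'-y',y_n)=D_{y_n}E(x',y_n)+R(y')$: the constant piece integrates to zero, and the mean value bound $|R(y')|\leq C|y'|\,|x'|^{-n}$ (valid since $|x'-sy'|\geq|x'|/2$ for $s\in[0,1]$) combined with $|D_{y_j}\Ga^{\prime}|\leq |y'|t^{-1}\Ga^{\prime}$ and the rescaling $z=y'/\sqrt t$ produces the stated $C|x'|^{-n}\int_{|z|\leq|x'|/(2\sqrt t)}|z|^{2}e^{-|z|^{2}}\,dz$.

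Region (a), $|x'-y'|\leq|x'|/2$, is the main obstacle: there $D_{y_n}E(x'-y',y_n)$ has a near-singularity at $y'=x'$ on a scale set by the free parameter $y_n$, and a direct pointwise bound produces a logarithmic divergence. My plan is to integrate by parts in $y_j$,
\[
I_a \;=\; -\!\!\int_{|x'-y'|\leq|x'|/2}\!\!\Ga^{\prime}(y',t)D_{y_j}D_{y_n}E(x'-y',y_n)\,dy' \;+\; \int_{|x'-y'|=|x'|/2}\!\!\Ga^{\prime}(y',t)D_{y_n}E(x'-y',y_n)\nu_j\,dS(y'),
\]
and separate the boundary term from the volume term. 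The boundary term, estimated with $|D_{y_n}E|\leq C|x'|^{1-n}$, surface area $\sim|x'|^{n-2}$, and $\Ga^{\prime}\leq Ct^{-(n-1)/2}e^{-c|x'|^2/t}$, produces exactly the first summand $Ct^{-(n-1)/2}e^{-|x'|^2/t}|x'|^{-1}$. In the volume integral I split $\Ga^{\prime}(y',t)=\Ga^{\prime}(x',t)+[\Ga^{\prime}(y',t)-\Ga^{\prime}(x',t)]$: the constant piece reduces via another application of the divergence theorem (in $y_j$) to a boundary integral of the same size, again of the form of the first summand. The Taylor-remainder piece hinges on the algebraic identity
\[
|y'-x'|\,|D_{y_j}D_{y_n}E(x'-y',y_n)| \;\leq\; C\,\frac{y_n}{(|x'-y'|^2+y_n^2)^{n/2}} \;=\; C\,|D_{y_n}E(x'-y',y_n)|,
\]
which follows from $|y'-x'|\cdot|x_j-y_j|\leq|y'-x'|^2\leq|y'-x'|^2+y_n^2$; combined with $|\Ga^{\prime}(y',t)-\Ga^{\prime}(x',t)|\leq C|y'-x'|\,|x'|t^{-(n+1)/2}e^{-c|x'|^2/t}$ on the segment from $x'$ to $y'$ (which stays in the shell $|\cdot|\geq|x'|/2$) and the uniform-in-$y_n$ bound $\int_{\Rn}|D_{y_n}E(z,y_n)|\,dz\leq C$ (by rescaling $z=y_n u$), it delivers the second summand $C|x'|t^{-(n+1)/2}e^{-c|x'|^2/t}$. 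The essential subtlety is that $D_{y_j}D_{y_n}E$ by itself is not absolutely integrable uniformly in $y_n$, so the identity that extracts $|D_{y_n}E|$ from $|y'-x'|\cdot|D_{y_j}D_{y_n}E|$ must be applied inside the integrand before using the triangle inequality.
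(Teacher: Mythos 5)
Your proof is correct and follows essentially the same strategy as the paper: integration by parts on the ball $|x'-y'|\leq|x'|/2$ into a boundary term plus a volume term, cancellation via the vanishing integral of $D_{y_j}\Ga'$ (region (c)) and of $D_{y_j}D_{y_n}E$ (region (a)), and pointwise plus rescaling bounds on the remaining regions. The only cosmetic divergence is in region (a): the paper notes that the "constant piece" vanishes outright by odd symmetry and passes to the Taylor remainder directly, whereas you treat it as a boundary term and bound it crudely (which is harmless); your pointwise identity $|y'-x'|\,|D_{y_j}D_{y_n}E|\leq C|D_{y_n}E|$ paired with $\int_{\Rn}|D_{y_n}E|\,dz'\leq C$ is just a cleaner repackaging of the paper's substitution $y'-x'=y_n u$.
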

\begin{proof}
Using integration by parts, we get
\begin{align}\label{0919}
& \notag \int_{|x'-y'| \leq \frac12 |x'|} D_{y_j}\Ga^{\prime} (y',t)  D_{x_n} E(x'-y',y_n)  dy'\\
 & \qquad  =  \int_{|x'- y'| = \frac12 |x'|}  \frac{x_j  - y_j}{|x' -y'|}\Ga^{\prime} (y',t)
          D_{y_n}E(x'-y',y_n) \si( dy')\\
  & \notag \qquad   \qquad
              -\int_{|x'-y'| \leq \frac12 |x'|}\Ga^{\prime} (y',t)  D_{y_j}D_{y_n}E(x'-y',y_n)  dy'.
\end{align}
For $y'$ with $|x' - y'| = \frac12 |x'|$, we get
$ | \Ga^{\prime} (y',t)  | \leq C t^{-\frac{n-1}2} e^{-\frac{|x'|^2}{2t}}$ and
$| D_{y_n}E(x'-y',y_n)|  \leq C\frac{1}{(|x'|^2 + y_n^2)^{\frac{n-1}2}} $.
Here, the first term of the right hand side in \eqref{0919} is dominated by
\begin{align}\label{0119}
\begin{array}{ll}\vspace{2mm}
  \int_{|x'-y'| =\frac12 |x'|}    |\Ga^{\prime}  (y',t)| |D_{y_n} E(x'-y',y_n)| \si (  dy')
 &\leq C t^{-\frac{n-1}2} e^{-\frac{|x'|^2}t } \frac{|x'|^{n-2}}{(|x'|^2 + y_n^2)^{\frac{n-1}2}}\\
 & \leq C t^{-\frac{n-1}2} e^{-\frac{|x'|^2}t } |x'|^{-1}.
 \end{array}
 \end{align}
 Since $ \int_{| x'-y'| \leq \frac12 |x'|}    D_{y_j }D_{x_n} E(x'-y',y_n)   dy' =0$,
 using the Mean value theorem,  the second term of  the right hand side of \eqref{0919} satisfies
\begin{align} \label{0119-2}
\begin{array}{ll} \vspace{2mm}
 &  \int_{|x'- y'| \leq \frac12 |x'|} ( \Ga^{\prime} (y',t)
            -\Ga^{\prime} (x',t) ) D_{y_j} D_{x_n}E(x'-y',y_n)  dy'\\ \vspace{2mm}
& \leq C  |x' | t^{-\frac{n-1}2 -1}  e^{-\frac{|x'|^2}t}
           \int_{|x'- y'| \leq \frac12 |x'|}
           \frac{|x'-y'|^2 y_n}{(|x'-y'|^2 + y_n^2)^{\frac{n}2 +1} }  dy'\\
           \vspace{2mm}
& \leq C  |x' | t^{-\frac{n-1}2 -1}  e^{-\frac{|x'|^2}t}
            \int_{\Rn}  \frac{1}{(|y'|^2 + 1)^{\frac{n}2}} dy'.
\end{array}
\end{align}
By \eqref{0919} - \eqref{0119-2}, we obtain $\eqref{0119-1}_1$.

For  $\eqref{0119-1}_2$, note that for $y'$ satisfying
$\frac12 |x'| \leq |y'| \leq 2|x'|$ we have $ |x'-y'| \geq \frac12 |x'|$.  We have
$| D_{y_j} \Ga^{\prime} (y',t)| \leq C t^{-\frac{n-1}2 -1} |x'| e^{-\frac{|x'|^2}{t}}$ and
$D_{y_n}E(x'-y',y_n) \leq  C  |x'|^{-\frac{n-1}2} $, and thus we get
 \begin{align*}
\int_{\frac12 |x'| \leq |y'| \leq 2|x'|, |x'-y'| \geq \frac12 |x'|}
     D_{y_j}\Ga^{\prime} (y',t) D_{y_n}E(x'-y',y_n)  dy'
& \leq C  t^{-\frac{n}2 -\frac12} |x'|   e^{-\frac{|x'|^2}{t}}.
\end{align*}
Hence, we obtain  $\eqref{0119-1}_2$.

Since $\int_{|y'| \leq \frac12 |x'|}  D_{y_j}\Ga^{\prime} (y',t)    dy  =0$,
using Mean-value Theorem $, \eqref{0119-1}_3$ is proved by
\begin{align*}
 & \int_{|y'| \leq \frac12 |x'|}  D_{y_j}\Ga^{\prime} (y',t)
           \Big(   D_{x_n} E(x'-y',y_n) -  D_{x_n} E(x',y_n) \Big) dy'\\
& \leq C  (|x'|^2 + y_n)^{ -\frac{n}2} \int_{|y'| \leq \frac12|x'|}
t^{-\frac{n+1}2} |y'|^2 e^{-\frac{|y'|^2}t}dy' \\
& \leq C (|x'|^2 + y_n)^{ -\frac{n}2} \int_{|y'| \leq \frac12\frac{|x'|}{\sqrt{t}}}   |y'|^2 e^{-|y|^2} dy'.
\end{align*}
Finally, $\eqref{0119-1}_4$  follows by
\begin{align*}
\int_{|y'| \geq 2 |x'|} D_{y_j}\Ga^{\prime} (y',t)D_{y_n} E(x'-y',y_n)  dy'
 &\leq C t^{-\frac{n +1}2  } \int_{ 2 |x'|  \leq |y'| }
          |y'|^{-n +2}  e^{-\frac{|y'|^2}{t}} dy'\\
 & =C t^{-\frac{n}2  } \int_{\frac{2 |x'|}{\sqrt{t}} \leq |y'| }
           |y'|^{-n +2}  e^{-|y'|^2} dy'.
\end{align*}
\end{proof}

Following a similar proof to Lemma 3.3, we get the following lemma.
\begin{lemm}\label{lemma2}
For $1 \leq i, \,\, j \leq n-1$, we get
\begin{align*}
&\int_{|x'-y'| \leq \frac12 |x'|} D_{y_j}\Ga^{\prime} (y',t)  D_{y_i} E(x'-y',y_n)  dy'
 \leq C t^{-\frac{n}2 -\frac12}|x' |e^{-\frac{|x'|^2}{t}}\\
&\int_{\frac12 |x'| \leq |y'| \leq 2|x'|, |x'-y'| \geq \frac12 |x'|}
            D_{y_j}\Ga^{\prime} (y',t)  D_{y_i} E(x'-y',y_n) dy'
 \leq C t^{-\frac{n}2 -\frac12}|x'| e^{-\frac{|x'|^2}{t}},\\
&\int_{|y| \leq \frac12 |x'|}D_{y_j}\Ga^{\prime} (y',t)  D_{y_i} E(x'-y',y_n) dy'
 \leq C |x'|^{-n} \int_{|y'| \leq C \frac12\frac{|x'|}{\sqrt{t}}}   |y'|^2 e^{-|y'|^2} dy',\\
& \int_{|y'| \geq 2 |x'|}  D_{y_j}\Ga^{\prime} (y',t)  D_{y_i} E(x'-y',y_n)  dy' \leq C t^{-\frac{n}2  }
\int_{\frac{2 |x'|}{\sqrt{t}} \leq |y'| } |y'|^{-n +2}  e^{-|y'|^2} dy'.
\end{align*}
\end{lemm}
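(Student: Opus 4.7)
The plan is to imitate the proof of Lemma \ref{lemma1} step by step, with the tangential derivative $D_{y_i}E$ ($1\leq i\leq n-1$) replacing the normal derivative $D_{y_n}E$. Since both derivatives satisfy the same pointwise bound $|D_{y_k}E(x'-y',y_n)| \leq C(|x'-y'|^2+y_n^2)^{-(n-1)/2}$ for $1\leq k\leq n$, and since the other factor $D_{y_j}\Gamma'(y',t)$ is unchanged, I expect the treatment of Regions 2, 3 and 4 to be a verbatim rerun of the corresponding arguments in the proof of Lemma \ref{lemma1}, producing the last three bounds in the statement with no new ingredient required.

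The substantive step is Region 1, $|x'-y'|\leq \tfrac{1}{2}|x'|$. Following the strategy of \eqref{0919}, I would integrate by parts in $y_j$ to move the derivative onto $E$:
\begin{align*}
\int_{|x'-y'|\leq |x'|/2} D_{y_j}\Gamma'(y',t) D_{y_i}E(x'-y',y_n)\,dy'
 & = \int_{|x'-y'|=|x'|/2} \Gamma'(y',t) D_{y_i}E(x'-y',y_n) \tfrac{x_j-y_j}{|x'-y'|}\,d\sigma(y') \\
 &\quad - \int_{|x'-y'|\leq |x'|/2} \Gamma'(y',t) D_{y_j} D_{y_i}E(x'-y',y_n)\,dy'.
\end{align*}
In the interior integral I would subtract $\Gamma'(x',t)$ and exploit the identity $\int_{|x'-y'|\leq R} D_{y_j}D_{y_i}E(x'-y',y_n)\,dy'=0$: for $i\neq j$ this is immediate from oddness of the integrand in the $(y_i-x_i)$ direction, while for $i=j$ it follows from $\sum_{k=1}^{n-1}D_{y_k}^2 E = -D_{y_n}^2 E$ (harmonicity of $E$ away from the origin, valid since $y_n>0$) combined with rotational symmetry in $y'$, which lets the residual normal contribution be rewritten as a surface integral paired with the boundary term above. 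The Mean Value bound $|\Gamma'(y',t)-\Gamma'(x',t)| \leq C|x'|t^{-n/2}e^{-|x'|^2/(Ct)}$ then produces the advertised $C t^{-(n+1)/2}|x'|e^{-|x'|^2/t}$ estimate, exactly mirroring \eqref{0119-2}.

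The hard part will be the boundary integral on $|x'-y'|=|x'|/2$. Unlike Lemma \ref{lemma1}, the leading $\Gamma'(x',t)$ piece here carries the angular factor $(x_i-y_i)(x_j-y_j)/|x'-y'|$, whose integral over the $(n-2)$-sphere vanishes for $i\neq j$ and, for $i=j$, cancels against the normal residue coming from the interior. This cancellation is precisely what removes the singular term $C t^{-(n-1)/2}e^{-|x'|^2/t}|x'|^{-1}$ that appears in the analogous estimate $\eqref{0119-1}_1$, and it is the only place where the argument requires a new observation beyond what the proof of Lemma \ref{lemma1} already provides.
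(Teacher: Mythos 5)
Your strategy follows what the paper means by ``a similar proof,'' and your observation that the leading $\Gamma'(x',t)$ pieces of the boundary and interior contributions cancel by the divergence theorem correctly explains why the first bound here has no analogue of the $t^{-\frac{n-1}{2}}e^{-|x'|^2/t}|x'|^{-1}$ term in $\eqref{0119-1}_1$. But the interior residual does \emph{not} ``exactly mirror'' \eqref{0119-2}, and the gap is quantitative: for $1\le i,j\le n-1$ the kernel $D_{y_j}D_{y_i}E(x'-y',y_n)$ is of size $(|x'-y'|^2+y_n^2)^{-n/2}$ and lacks the factor $y_n$ that $D_{y_j}D_{y_n}E$ carries and that \eqref{0119-2} uses to make $\int_{|x'-y'|\le R}|x'-y'|^2 y_n(|x'-y'|^2+y_n^2)^{-\frac{n+2}{2}}\,dy'$ a finite constant. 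Keeping the $|x'-y'|$ factor from the mean value theorem, the disc integral in your interior term becomes $\int_{|z'|\le R}|z'|(|z'|^2+y_n^2)^{-n/2}\,dz'$, which grows like $\log(R/y_n)$. As written, your argument produces the stated bound only up to an extra $\log(|x'|/y_n)$, which is not what the lemma asserts.

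The gap is most easily closed without integrating by parts at all, and the remedy also exhibits the structural reason the estimate improves when $i\le n-1$. Because $D_{y_i}E(x'-y',y_n)$ is odd in $x_i-y_i$ for $i\le n-1$, one has $\int_{|x'-y'|\le\frac12|x'|}D_{y_i}E(x'-y',y_n)\,dy'=0$ (a vanishing unavailable for $i=n$, since $D_{y_n}E$ is radial in $x'-y'$). Hence
\[
\int_{|x'-y'|\le\frac12|x'|}D_{y_j}\Gamma'(y',t)\,D_{y_i}E(x'-y',y_n)\,dy'
=\int_{|x'-y'|\le\frac12|x'|}\bigl(D_{y_j}\Gamma'(y',t)-D_{y_j}\Gamma'(x',t)\bigr)D_{y_i}E(x'-y',y_n)\,dy',
\]
which is the same device as $\eqref{0119-1}_3$, applied to $D_{y_j}\Gamma'$ rather than to $\Gamma'$. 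The mean value theorem gives $|D_{y_j}\Gamma'(y',t)-D_{y_j}\Gamma'(x',t)|\le C|x'-y'|\,(t^{-1}+|x'|^2t^{-2})\,t^{-\frac{n-1}{2}}e^{-\frac{|x'|^2}{Ct}}$ on the ball, $\int_{|z'|\le\frac12|x'|}|z'|(|z'|^2+y_n^2)^{-\frac{n-1}{2}}\,dz'\le C|x'|$, and absorbing $|x'|^2/t$ into the exponential yields $C\,t^{-\frac{n}{2}-\frac12}|x'|\,e^{-\frac{|x'|^2}{Ct}}$ cleanly, with no boundary term and no logarithm.
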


\noindent
{\bf Proof of Lemma \ref{L-1-norm}.}

Note that for $1 \leq i \leq n$ and $1 \leq j \leq n-1$
\begin{align*}
\int_0^T  \int_{\Rn} |K_{ij} (x',x_n,t)| & dx'dt \leq  \int_0^T \int_{\Rn} | D_{x_n}\Ga (x',x_n,t)|dx'dt  \\
 &+ \int_0^T \int_{\Rn} |L_{ij} (x',x_n,t)|dx'dt.
\end{align*}
 Here, using change of variables ($\frac{x_n^2}t =s$), we get
\begin{align} \label{1006-2}
\begin{array}{ll}\vspace{2mm}
\int_0^T \int_{\Rn}  |D_{x_n} \Ga(x', x_n ,t)| dx' dt
            & = C \int_0^T t^{-\frac32} x_n  e^{-\frac{x_n^2}t}
            \int_{\Rn}  t^{-\frac{n-1}2} e^{-\frac{|x'|^2}t} dx'dt\\ \vspace{2mm}
    & = C  x_n \int_0^T t^{-\frac32}  e^{-\frac{x_n^2}t}
            \int_{\Rn}    e^{-|x'|^2} dx'dt\\
    & = C x_n \int_0^T (\frac{x_n^2}s)^{-\frac32} x_n^2 s^{-2}   e^{-s}ds.
    \end{array}
\end{align}

Hence, to prove Lemma \ref{L-1-norm},
it is sufficient to show  $\int_0^T \int_{\Rn} |L_{ij}(x', x_n, t)| dx'dt < \infty$ for
$1 \leq i \leq n, \,\, 1 \leq j \leq n-1$.

By $\eqref{inequality11}_1$,  for $1 \leq i \leq n$ and $1 \leq j \leq n-1$, we get
\begin{align} \label{0920-8}
\begin{array}{ll}  \vspace{2mm}
\int_0^{x_n^2} \int_{\Rn} |L_{ij}(x', x_n, t)| dx'dt  &\leq C \int_0^{x_n^2}
\int_{\Rn} t^{-\frac12} (|x'|^2 + x_n^2 + t)^{-\frac{n}2} dx'dt\\
& \leq C  \int_0^{x_n^2}  t^{-\frac12} (x_n^2 + t)^{-\frac12} dt =C.
\end{array}
\end{align}
To calculate  $\int_{x_n^2} ^T \int_{\Rn} |L_{ij}(x', x_n, t)| dx'dt$, we  may assume $x_n^2 \leq T$.
By the representation \eqref{0920},  and Lemma \ref{lemma1} and Lemma \ref{lemma2}, we have
\begin{align} \label{0920-6}
\begin{array}{ll} \vspace{2mm}
&\int_{x_n^2} ^T \int_{\Rn} |L_{ij}(x', x_n, t)| dx'dt \\ \vspace{2mm}
& \leq  C \int_{x_n^2} ^T  \int_{\Rn} \int_0^{x_n} t^{-\frac32} y_n e^{-\frac{y_n^2}t}
\Big(    t^{-\frac{n-1}2} e^{-\frac{|x'|^2}t } |x'|^{-1}
      + t^{-\frac{n}2 -\frac12}|x' |e^{-\frac{|x'|^2}{t}}\\
 \vspace{2mm}
 & \qquad +  |x'|^{-n} \int_{|y'| \leq \frac12\frac{|x'|}{\sqrt{t}}}   |y'|^2 e^{-|y'|^2} dy'
 + t^{-\frac{n}2  } \int_{\frac{2 |x'|}{\sqrt{t}} \leq |y'| }
           |y'|^{-n +2}  e^{-|y'|^2} dy'        \Big) dy_n dx' dt \\
 &= I + II + III + IV,
\end{array}
\end{align}
where
\begin{align*}
I& = \int_{x_n^2} ^T  \int_{\Rn} \int_0^{x_n} t^{-\frac32} y_n e^{-\frac{y_n^2}t}
    t^{-\frac{n-1}2} e^{-\frac{|x'|^2}t } |x'|^{-1} dy_n dx' dt ,\\
II& = \int_{x_n^2} ^T  \int_{\Rn} \int_0^{x_n} t^{-\frac32} y_n e^{-\frac{y_n^2}t}
 t^{-\frac{n}2 -\frac12}|x' |e^{-\frac{|x'|^2}{t}}  dy_n dx' dt,\\
III & = \int_{x_n^2} ^T  \int_{\Rn} \int_0^{x_n} t^{-\frac32} y_n e^{-\frac{y_n^2}t}
  |x'|^{-n} \int_{|y'| \leq \frac12\frac{|x'|}{\sqrt{t}}}   |y'|^2 e^{-|y'|^2} dy'  dy_n dx' dt,\\
IV & = \int_{x_n^2} ^T  \int_{\Rn} \int_0^{x_n} t^{-\frac32} y_n e^{-\frac{y_n^2}t}
t^{-\frac{n}2  } \int_{\frac{2 |x'|}{\sqrt{t}} \leq |y'| }
           |y'|^{-n +2}  e^{-|y'|^2} dy'  dy_n dx' dt.
\end{align*}
Using change of variables twice, we have
\begin{align}
\begin{array}{ll} \label{0920-4} \vspace{2mm}
I
&  = \int_{x_n^2}^T t^{-\frac{n}2-1}  \int_{\Rn}  e^{-\frac{|x'|^2}t } |x'|^{-1}  t
         \int_0^{\frac{x_n}{\sqrt{t}}} y_n e^{-y_n^2}  dy_n dx' dt   \\ \vspace{2mm}
&   \leq  C\int_{x_n^2}^T t^{-\frac{n}2-1}
         \int_{\Rn}  e^{-\frac{|x'|^2}t } |x'|^{-1}  t  ( \frac{x_n}{\sqrt{t}} )^2   dx' dt \\ \vspace{2mm}
& = C    \int_{x_n^2}^T  x_n^2 t^{-2} dt\\
& =C
\end{array}
\end{align}
and
\begin{align}
\begin{array}{ll}   \label{0920-5}\vspace{2mm}
 II
&   = \int_{x_n^2}^T t^{-\frac{n}2-2}  \int_{\Rn}  e^{-\frac{|x'|^2}t } |x|  t
            \int_0^{\frac{x_n}{\sqrt{t}}} y_n e^{-y_n^2}   dy_n dx' dt \\ \vspace{2mm}
&   \leq  C\int_{x_n^2}^T t^{-\frac{n}2-2}
              \int_{\Rn}  e^{-\frac{|x'|^2}t } |x'| t  ( \frac{x_n}{\sqrt{t}} )^2   dx' dt \\ \vspace{2mm}
& = C    \int_{x_n^2}^T  x_n^2 t^{-2}   dt \\
& =C.
\end{array}
\end{align}

We divide $III$ into two parts $III = III_1 + III_2$, where
\begin{align*}
III_1 & =\int_{x_n^2} ^T \int_{|x'| \leq \sqrt{t}} \int_0^{x_n} t^{-\frac32} y_n e^{-\frac{y_n^2}t}
  |x'|^{-n} \int_{|y'| \leq \frac12\frac{|x'|}{\sqrt{t}}}   |y'|^2 e^{-|y'|^2} dy'  dy_n dx' dt,\\
III_2 & =\int_{x_n^2} ^T  \int_{|x'| \geq \sqrt{t} } \int_0^{x_n} t^{-\frac32} y_n e^{-\frac{y_n^2}t}
  |x'|^{-n} \int_{|y'| \leq \frac12\frac{|x'|}{\sqrt{t}}}   |y'|^2 e^{-|y'|^2} dy'  dy_n dx' dt.
\end{align*}
Here,
\begin{align*}
III_{1}  & \leq C  \int_{x_n^2}^T  t^{-\frac32}
       \int_{|x'| \leq \sqrt{t}}  |x'|^{-n}   ( \frac{|x'|}{\sqrt{t}}  )^{n +1}
       \int_0^{ x_n}  y_n  e^{-\frac{y_n^2}t}  dy_n dx' dt \\
& = C \int_{x_n^2}^T   x_n^2
         t^{-\frac{n}2 -2}  \int_{|x'| \leq \sqrt{t} }   |x'|   dx' dt  \\
& \leq C \int_{x_n^2}^T   x_n^2 t^{-2}    dt   \\
& =C,\\
III_{2}  & \leq   C\int_{x_n^2}^T   t^{-\frac{3}2}  \int_{|x'| \geq \sqrt{t} } |x'|^{-n}
                            \int_0^{  x_n}   y_n e^{-\frac{y_n^2}t}  dy_n dx' dt  \\
& \leq   C\int_{x_n^2}^T   t^{-\frac{3}2} x_n^2  \int_{|x'| \geq \sqrt{t} }  |x'|^{-n}    dx' dt \\
& \leq  C \int_{x_n^2}^T   t^{-2} x_n^2    dt\\
& =C.
\end{align*}
Hence, we get
\begin{align}\label{0920-2}
III < C.
\end{align}

Similarly, we divide $IV$ into two parts
\begin{align*}
IV
&  = \int_{x_n^2}^T \int_{ |x'| \leq \sqrt{t} }     \int_0^{x_n}
    +   \int_{x_n^2}^T \int_{ \sqrt{t} \leq |x'|}     \int_0^{x_n}\\
&   = IV_1 + IV_2.
\end{align*}
Here, with straightforward integrations
\begin{align*}
IV_1 &  \leq    C  \int_{x_n^2}^T   t^{-\frac{n}2  -\frac32 }   \int_{ |x'| \leq \sqrt{t}}
                     \int_0^{x_n}    y_n e^{-\frac{y_n^2}t} dy_n dx' dt \\
 & \leq C \int_{x_n^2}^T    t^{-\frac{n}2  -\frac32 }  x_n^2 \int_{ |x'| \leq \sqrt{t} }  dx' dt \\
& \leq   C\int_{x_n^2}^T    t^{-2 }  x_n^2    dt\\
& =C
\end{align*}
and
\begin{align*}
IV_2  & \leq C \int_{x_n^2}^T    t^{-\frac{n}2  -\frac32} \int_{ \sqrt{t} \leq |x'|}
                 \int_0^{x_n}  y_n e^{-\frac{y_n^2}t}
          \int_{\frac{2 |x'|}{\sqrt{t}} \leq |y'| }   |y'|^{-n +2}  e^{-|y'|^2} dy'  dy_n  dx' dt \\
    & \leq C \int_{x_n^2}^T    t^{-\frac{n}2  -\frac32} x_n^2   \int_{ \sqrt{t} \leq |x'|}
                    \int_{\frac{2 |x'|}{\sqrt{t}} \leq |y'| }   |y'|^{-n +2}  e^{-|y'|^2}  dy' dx' dt   \\
    & = C\int_{x_n^2}^T    t^{-2} x_n^2   \int_{ 1\leq |x'|}
                    \int_{|x'| \leq |y'| }   |y'|^{-n +2}  e^{-|y'|^2} dy' dx' dt\\
         & \leq C.
\end{align*}
Hence, we get
\begin{align} \label{0920-3}
IV \leq C.
\end{align}

Therefore, from \eqref{0920-8}- \eqref{0920-3}, we prove
\begin{align} \label{0921}
\int_0^T \int_{\Rn} | L_{ij} (x',x_n, t)| dx'dt \leq C
\end{align}
for $1 \leq i \leq n$ and $1 \leq j \leq n-1$,
where $C$ is independent of $x_n$. With \eqref{1006-2}, this implies \eqref{0126-1}.

By the second identity of \eqref{1006-3} and \eqref{0921},  we prove \eqref{0126-2} for the case $i \neq n$,
and by the first identity of \eqref{1006-3} and \eqref{0921},  we prove \eqref{0126-2} for the case $i = n$.
This ends the proof of Lemma 3.1.

\noindent
$\Box$

\noindent
{\bf Proof of Theorem 3.2.}

We begin the proof of Theorem 3.2 by the representation \eqref{representation} of $u$ such that
\begin{align*}
u^i(x,t) &= \sum_{j=1}^{n}\int_0^t \int_{\Rn}
K_{ij}( x^{\prime}-y^{\prime},x_n,t-s)g_j(y^{\prime},s) dy^{\prime}ds,\\
&= \sum_{j=1}^{n-1}\int_0^t \int_{\Rn}
K_{ij}( x^{\prime}-y^{\prime},x_n,t-s)g_j(y^{\prime},s) dy^{\prime}ds\\
&\quad\quad+ \int_0^t \int_{\Rn}
K_{in}( x^{\prime}-y^{\prime},x_n,t-s)g_n(y^{\prime},s) dy^{\prime}ds
\end{align*}
and the last potential for $g_n$ is written as
\begin{align*}
 \int_0^t \int_{\Rn} &
K_{in}( x^{\prime}-y^{\prime},x_n,t-s)g_n(y^{\prime},s) dy^{\prime}ds\\
=&-2 \delta_{in} \int_0^t \int_{\Rn}
D_{x_n} \Ga(x^\prime -y^{\prime},x_n,t-s)g_n(y^{\prime},s) dy^{\prime}ds\\
&+4  \int_0^t \int_{\Rn}
L_{in}( x^{\prime}-y^{\prime},x_n,t-s)g_n(y^{\prime},s) dy^{\prime}ds\\
&-\frac{\partial}{\partial x_i}  \int_0^t \int_{\Rn}
E( x^{\prime}-y^{\prime},x_n)g_n(y^{\prime},s) dy^{\prime}ds.
\end{align*}
Since $L_{in} = L_{ni} +B_{in}$, we have
\begin{align*}
 \int_0^t \int_{\Rn} &
L_{in}( x^{\prime}-y^{\prime},x_n,t-s)g_n(y^{\prime},s) dy^{\prime}ds\\
=& \int_0^t \int_{\Rn}
L_{ni}( x^{\prime}-y^{\prime},x_n,t-s)g_n(y^{\prime},s) dy^{\prime}ds\\
&+ \frac{\partial}{\partial x_i} \int_0^t \int_{\Rn}
\kappa ( x^{\prime}-y^{\prime},x_n,t-s)g_n(y^{\prime},s) dy^{\prime}ds
\end{align*}
 for $1 \leq i \leq n-1$ , where we defined the composite kernel function $\kappa(x,t)$ on ${\bf R}^{n}_{+}\times (0,T)$ by
$$
\kappa (x,t) =   \int_{{\bf R}^{n-1}}  \frac{\partial }{\partial x_n}   \Ga(x^\prime-z^{\prime},x_n,t)
 E  (z^{\prime}, 0)dz^{\prime} .
$$

Define the surface potential ${\bf T}(g_n)$ by
\begin{align}\label{secondpotential}
{\bf T}(g_n)(x,t) =4 \int_{0}^{t }\int_{ {\bf R}^{n-1}} \kappa(x^\prime -y^\prime,x_n,t-s) g_n(y^\prime,s) dy^\prime ds.
\end{align}
Moreover, we have that
$$
\frac{\partial}{\partial x_i}   \int_{\Rn}
E( x^{\prime}-y^{\prime},x_n)g_n(y^{\prime},s) dy^{\prime} = \frac{\partial}{\partial x_i} {\bf S}(g_n)
$$
Therefore we conclude that the tangential part, which is associated with $L_{ij}$, satisfies
$$
 |u_T (x,t) - \nabla {\bf S}_T  (g_n)(x,t) -\nabla  {\bf T}_T  (g_n)(x,t)|
\leq C ||g||_{ L^{\infty} (  {\bf R}^{n-1}\times (0,T ) )  }
$$
for all $(x,t) \in  {\bf R}^{n}_{+}\times (0,T )  $.

The normal velocity $u_n$ behaves even better.
First, we know that $\frac{\partial}{\partial x_n} {\bf S}(g_n)$ is the Poisson kernel expression of the solution for the Laplace equation in the half space and satisfies the maximum principle.
In the case $i=n$, we have a relation  from \eqref{1006-3}
$$
L_{nn} = - \sum_{1 \leq i \leq n-1} L_{ii}  -2D_{x_n} \Ga
$$
which has a bounded $L^1$ norm on the lateral surface. This conclude the maximum modulus estimate of $u_n$.

\noindent
$\Box$

\section{Maximum Modulus Estimate in  $C^2$ Domain}
\setcounter{equation}{0}
We denote the Green's matrix for the domain $\Omega$ by $(G^{\Omega},\theta^{\Omega})$
and for a given point $x \in \Omega$ we let $\bar{x} \in \partial\Omega$ satisfy
$|x-\bar{x}|=dist(x,\partial\Omega)$.
The interior $L^\infty$ bound estimate can be shown by the layer potential method in {\cite{Sh1} and  we consider separately the case
that the generic point $x$ is close enough to $\partial\Omega$.

Indeed, to see the interior boundedness, we need to show the boundedness of the double layer potential in $L^{\infty}(0,T;L^2(\partial\Om))$.
 Since the boundary data is bounded, we can represent the solution by the double layer potential in {\cite{Sh1}
from $L^2$ theory such that
\begin{align}\label{density}
u_i(x,t) = & \int_0^t \int_{\partial\Om} \frac{\partial F_{ij}}{\partial N(y)} (x-y,t-s) h_j(y,s) d\sigma(y)ds\\
&-\int_{\partial\Om} \frac{y_i-x_i}{\omega_n |y-x|^n} h(y,t)\cdot N(y) d\sigma(y)\nonumber\\
=&({\bf K} h)_i (x,t)\nonumber
\end{align}
and
\begin{equation}\label{double}
g = -\frac{1}{2} h + {\bf K} h = (-\frac{1}{2}{\bf I}  + {\bf K} ) h
\end{equation}
in the sense of $L^2( \partial \Om \times (0,T))$ for an $h\in L^2( \partial\Om\times (0,T))$(see Theorem 2.3.6 and Theorem 5.1.2 in \cite{Sh1}). Furthermore $-\frac{1}{2}{\bf I}  + {\bf K}$ is invertible on
$L^2_\sigma( \partial \Om \times (0,T))$, where the subscript $\sigma$ means solenoidal.
 From the representation, we have a
continuity lemma in time of the density function $h$ in (\ref{density}).
\begin{lemm}
The inverse of the double layer potential $-\frac{1}{2}{\bf I}  + {\bf K}$  is bounded in time as an operator of $L^2(\partial\Om)$ and there is a constant $\delta >0$ such that if $|t_1-t_2| <\delta$,
$$
|| h(\cdot,t_2)||_{ L^2( \partial\Om)}  \leq C || g(\cdot,t_1) -g(\cdot,t_2)||_{ L^2( \partial\Om)} + C ||h||_{L^{\infty}(0,t_1;L^2(\Om))}.
$$
By an iteration there is $C$ such that
$$
||(-\frac{1}{2}{\bf I}  + {\bf K})^{-1} g ||_{L^{\infty}(0,T;L^2(\Om))} \leq C||g||_{L^{\infty}(0,T;L^2(\Om))}.
$$
\end{lemm}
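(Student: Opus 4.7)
The plan is to decompose ${\bf K}$ from (\ref{density}) as ${\bf K}={\bf K}_v+{\bf K}_p$, where ${\bf K}_p$ is the \emph{spatial} pressure double layer $-\int_{\partial\Omega}\frac{y_i-x_i}{\omega_n|y-x|^n}h(y,t)\cdot N(y)\,d\sigma(y)$ acting at fixed time $t$, and ${\bf K}_v$ is the Volterra time-convolution against $\partial F_{ij}/\partial N$ (coming from the heat portion of the Stokes matrix). On a $C^2$ domain the stationary operator $-\tfrac12{\bf I}+{\bf K}_p$ is invertible on the solenoidal subspace $L^2_\sigma(\partial\Omega)$ by the classical layer-potential theory used in \cite{Sh1}; this stationary inverse will be the workhorse.

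To obtain the local estimate, given $t_1<t_2$ with $t_2-t_1<\delta$, I subtract the identity $g=(-\tfrac12{\bf I}+{\bf K})h$ at the two times and set $\phi(\cdot):=h(\cdot,t_2)-h(\cdot,t_1)$. Since ${\bf K}_p$ acts instantaneously while ${\bf K}_v$ carries memory, this yields
\begin{align*}
\bigl(-\tfrac12{\bf I}+{\bf K}_p\bigr)\phi
&= \bigl(g(\cdot,t_2)-g(\cdot,t_1)\bigr) - \int_{t_1}^{t_2}{\bf K}_v(\cdot,t_2-s)h(\cdot,s)\,ds\\
&\quad - \int_0^{t_1}\bigl[{\bf K}_v(\cdot,t_2-s)-{\bf K}_v(\cdot,t_1-s)\bigr]h(\cdot,s)\,ds.
\end{align*}
Inverting $-\tfrac12{\bf I}+{\bf K}_p$ on the left reduces the task to controlling the two correction integrals in $L^2(\partial\Omega)$. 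The first integral, over $s\in[t_1,t_2]$, is handled by an operator-norm bound of the form $\|{\bf K}_v(\cdot,\tau)\|_{L^2\to L^2}\le C\tau^{-1/2}$; integrated over the short interval this produces a prefactor of order $\delta^{1/2}$, so for $\delta$ small the contribution involving $h(\cdot,t_2)$ is absorbed on the left. The second correction, with $s\le t_1$, uses time regularity of ${\bf K}_v$ in its non-singular regime $t_j-s\ge t_2-t_1$, and a mean-value bound then dominates it by a $\delta$-dependent constant times $\|h\|_{L^\infty(0,t_1;L^2(\partial\Omega))}$. Combining with $\|h(\cdot,t_1)\|\le\|h\|_{L^\infty(0,t_1;L^2(\partial\Omega))}$ yields the claimed inequality for $\|h(\cdot,t_2)\|$.

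For the global bound I exploit zero initial data: the compatibility condition $\int_{\partial\Omega}g\cdot N\,d\sigma=0$ together with $u|_{t=0}=0$ force $g(\cdot,0)=0$, and then invertibility of the stationary operator gives the base case $h(\cdot,0)=0$. Partitioning $[0,T]$ into $N\sim T/\delta$ subintervals of length $\delta$ and iterating the local bound in the form
\[
\|h(\cdot,t_{k+1})\|\le 2C\|g\|_{L^\infty(0,T;L^2(\partial\Omega))}+C\|h\|_{L^\infty(0,t_k;L^2(\partial\Omega))}
\]
yields a bound that grows at worst geometrically in the number of steps, hence a finite $C_T$ for any fixed $T$, which is the second assertion of the lemma.

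The main obstacle is the second correction integral, because $\partial_s{\bf K}_v(\cdot,t_j-s)$ is not integrable as $s\to t_1^-$, so no single mean-value bound applies uniformly on $[0,t_1]$. The remedy is to split the $s$-integral at $s=t_1-\varepsilon$: on $[0,t_1-\varepsilon]$ the kernel difference is genuinely small by a mean-value inequality, giving a bound of order $\delta/\varepsilon^\alpha$ for the appropriate $\alpha$, while on $[t_1-\varepsilon,t_1]$ one drops the difference and treats each term separately under the singular $L^2$-operator bound used for the first correction, contributing $O(\varepsilon^{1/2})$. Optimizing $\varepsilon$ against $\delta$ closes the estimate.
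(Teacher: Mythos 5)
Your proposal follows essentially the same route as the paper: the same decomposition of ${\bf K}$ into the instantaneous pressure double layer plus the Volterra heat-type memory part, inversion of the stationary operator applied to $h(\cdot,t_2)-h(\cdot,t_1)$, a $C\sqrt{\delta}$ operator bound with absorption for the integral over $(t_1,t_2)$, and iteration starting from $h(\cdot,0)=0$. The only notable differences are that the paper proves the invertibility of $-\tfrac12{\bf I}+{\bf K}_p$ explicitly (reducing to the scalar harmonic double layer because the kernel depends only on $h\cdot N$, then recovering the tangential part algebraically) where you cite classical theory, and your $\varepsilon$-splitting with mean-value optimization for the memory term over $[0,t_1]$ is unnecessary: since the constant multiplying $\|h\|_{L^{\infty}(0,t_1;L^2(\partial\Om))}$ need not be small, one simply bounds the two kernels separately using the integrable $(t_j-s)^{-1/2}$ operator bound, which is what the paper does.
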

\begin{proof}
 We assume the boundary data $g\in L^{\infty}(0,T;L^{2}(\partial\Om))$ and after arranging the singular integrals
in the double layer potential expression we have
\begin{align*}
g_i(x,t_2) -& g_i(x,t_1) =- \frac{1}{2}(h_i(x,t_2)-h_i(x(t-1))\\
&-\int_{\partial\Om} \frac{y_i-x_i}{\omega_n |y-x|^n}( h(y,t_1)-h(y,t_2))\cdot N(y) d\sigma(y)\\
&+ \int_{t_1}^{t_2} \int_{\partial\Om} \frac{\partial F_{ij}}{\partial N(y)} (x-y,t_2-s) h_j(y,s) d\sigma(y)ds\\
&+ \int_{0}^{t_1} \int_{\partial\Om}\left( \frac{\partial F_{ij}}{\partial N(y)} (x-y,t_2 -s) -\frac{\partial F_{ij}}{\partial N(y)} (x-y,t_1 -s)\right) h_j(y,s) d\sigma(y)ds\\
=& (-\frac{1}{2}{\bf I}  + {\bf H} )( h(\cdot,t_2) -h(\cdot,t_1)) + {\bf E}_1 h  + {\bf E}_2  h
\end{align*}
for almost all $0<t_2<t_2<T$ and $x\in\partial\Om$.

We claim $ -\frac{1}{2}{\bf I}  + {\bf H} :L^2(\partial\Om)\rightarrow L^2(\partial\Om)$ is invertible and
$$
 ||(-\frac{1}{2}{\bf I}  + {\bf H} )^{-1} e||_{L^2(\partial\Om)}  \leq C ||e||_{L^2(\partial\Om)}
$$
for a constant $C$. First of all, if we set $e=(-\frac{1}{2}{\bf I}  + {\bf H} ) f$ and consider the normal components, then we have
$$
e_N = (-\frac{1}{2}{\bf I}  + N\cdot {\bf H} )f_N,
$$
where  $N\cdot{\bf H}$ is the standard double layer potential operator of Laplace equation and  $-\frac{1}{2}{\bf I}  +N\cdot {\bf H} $ is invertible. So given vector valued function $e\in L^2(\partial\Om)$, there is a scalar valued function $w\in L^2(\Om)$ satisfying
$$
e_N = (-\frac{1}{2}{\bf I}  + N\cdot {\bf H} )w
$$
with $ ||w||_{L^2(\partial\Om)} \leq C||e||_{L^2(\partial\Om)}$.
Here $w$ is the normal component of $f$ and the tangential component $v$ of $f$ is obtained by
$$
v = -2 (e - e_N N) - 2({\bf H}w - (N\cdot{\bf H})w N).
$$
Therefore we get
$$
f = v + wN
$$
and $f$ satisfies
$$
 ||f||_{L^2(\partial\Om)} \leq C||e||_{L^2(\partial\Om)}.
$$

It remains the estimate ${\bf E}_1 h $ and ${\bf E}_2 h $. Since $\Om$ is $C^2$ domain, in the case of Gaussian kernel, there is $C$ such that for all $(x,y,t) \in \partial\Om\times\partial\Om \times (0,T)$
$$
|\frac{\partial}{\partial N(y)} \Ga (x-y,t) | \leq C \frac{|x-y|^2}{\sqrt{t}^{n+2}} e^{-\frac{|x-y|^2}{2t}}.
$$
Therefore we get from Minkowski inequality and Young's convolution inequality
\begin{align*}
&\left(\int_{\partial\Om} \left| \int_{t_1}^{t_2} \int_{\partial\Om} \frac{\partial \Ga}{\partial N(y)} (x-y,t_2-s) h(y,s) d\sigma(y)ds \right|^2 d\sigma(x) \right)^{\frac{1}{2}}\\
&\leq C\left( \int_{\partial\Om}  \left| \int_{t_1}^{t_2} \frac{1}{\sqrt{t_2 -s}}\frac{1}{\sqrt{t_2-s}^{n-1} } \int_{\partial\Om}
 \frac{|x-y|^2}{t_2-s} e^{-\frac{|x-y|^2}{2(t_2 -s)}}| h(y,s)| d\sigma(y)ds \right|^2 d\sigma(x) \right)^{\frac{1}{2}}\\
&\leq C  \int_{t_1}^{t_2}  \frac{1}{\sqrt{t_2-s}}\left( \int_{\partial\Om}\left| \frac{1}{\sqrt{t_2-s}^{n-1}}  \int_{\partial\Om}
\frac{|x-y|^2}{t_2-s}  e^{-\frac{|x-y|^2}{2(t_2-s)}}| h(y,s)| d\sigma(y)\right|^2  d\sigma(x) \right)^{\frac{1}{2}}ds\\
&\leq C \int_{t_1}^{t_2}  \frac{1}{\sqrt{t_2-s}} ||h(\cdot,s)||_{L^2(\Om)} ds\\
&\leq C\sqrt{ t_2- t_1} ||h||_{L^{\infty}(t_1,t_2;L^2(\Om))}.
\end{align*}
By the same token, assuming $ ||h||_{L^{\infty}(0,t_1;L^2(\Om))}$ is bounded, we have that
$$
|| {\bf E}_2 h(\cdot,t_1)||_{L^2(\partial\Om)} \leq C ||h||_{L^{\infty}(0,t_1;L^2(\Om))}.
$$

\end{proof}

We let  the generic point $x$ be away from the boundary, say $dist(x,\partial\Om) =r_0 >0$.
Since the kernel of the double layer is bounded by $\frac{C}{r_0^{n-1-\epsilon}}$ for each $\epsilon >0$ and the density function $h$ of $g$ for the double layer potential  is bounded in $L^\infty(0,T;L^2(\partial\Om))$,
the interior $L^\infty$ estimate follows.
\begin{coro}
Suppose the boundary data $g\in L^\infty(0,T;L^2(\partial\Om))$. If $dist(x,\partial\Om) \geq r_0 >0$,  $\epsilon >0$ and $t<T$,
then there is $C$ such that
$$
|u(x,t)| \leq \frac{C}{ r_0^{n-1-\epsilon}} ||g||_{L^\infty(0,T;L^2(\partial\Om))}.
$$
\end{coro}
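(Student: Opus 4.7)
The proof combines the density estimate from Lemma 4.1 with pointwise kernel bounds for the Stokes fundamental matrix in the regime where $x$ stays uniformly away from $\pa\Om$. First I would apply Lemma 4.1 to write $u = \mathbf{K}h$ via the double layer representation \eqref{density}, where $h = (-\frac{1}{2}\mathbf{I} + \mathbf{K})^{-1} g$ satisfies $\|h\|_{L^\infty(0,T;L^2(\pa\Om))} \leq C \|g\|_{L^\infty(0,T;L^2(\pa\Om))}$. This reduces the problem to bounding $(\mathbf{K}h)(x,t)$ pointwise for any $x$ with $\mathrm{dist}(x,\pa\Om) \geq r_0$.

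I would then split $\mathbf{K}h$ into the single-time harmonic piece $u_i^{(2)}(x,t) = -\int_{\pa\Om} \frac{y_i-x_i}{\omega_n |y-x|^n} h(y,t)\cdot N(y)\, d\sigma(y)$ and the time-convolution piece $u_i^{(1)}(x,t) = \int_0^t \int_{\pa\Om} \frac{\pa F_{ij}}{\pa N(y)}(x-y,t-s) h_j(y,s)\, d\sigma(y)\, ds$. For $u^{(2)}$ the kernel is bounded by $C|y-x|^{-(n-1)} \leq C r_0^{-(n-1)}$ on $\pa\Om$, so Cauchy--Schwarz in $y$ together with the finiteness of $|\pa\Om|$ yields $|u^{(2)}(x,t)| \leq C r_0^{-(n-1)} \|h(\cdot,t)\|_{L^2(\pa\Om)}$.

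For $u^{(1)}$ I would invoke the standard pointwise estimate $|\pa_{N(y)} F_{ij}(z,\tau)| \leq C(|z|^2+\tau)^{-(n+1)/2}$ for the Stokes fundamental matrix, which is in the same spirit as \eqref{inequality1}. Applying Cauchy--Schwarz in $y$ and the uniform lower bound $|x-y| \geq r_0$ on $\pa\Om$, followed by the convergent time integration
\begin{align*}
\int_0^t \frac{ds}{(r_0^2 + s)^{(n+1)/2}} \leq \frac{C}{r_0^{n-1}},
\end{align*}
which holds because $(n+1)/2 > 1$ for $n \geq 3$, yields $|u^{(1)}(x,t)| \leq C r_0^{-(n-1)} \|h\|_{L^\infty(0,T;L^2(\pa\Om))}$. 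Combining with the density bound from Lemma 4.1 completes the proof; in fact the sharper exponent $n-1$ (without the $\epsilon$) drops out of the argument.

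The argument is essentially mechanical once Lemma 4.1 is in hand. The only point requiring a moment of care is the $u^{(1)}$ estimate, since the integrand $(|x-y|^2 + (t-s))^{-(n+1)/2}$ is not integrable in $s$ over $(0,T)$ if $|x-y|$ is allowed to vanish; the lower bound $|x-y| \geq r_0 > 0$ is precisely what rescues both the spatial $L^2$ integrability and the time convergence and produces the claimed $r_0^{-(n-1)}$ factor.
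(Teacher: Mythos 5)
Your strategy is the same as the paper's: invert the double layer via Lemma 4.1 to get $\|h\|_{L^\infty(0,T;L^2(\pa\Om))}\le C\|g\|_{L^\infty(0,T;L^2(\pa\Om))}$, then estimate the two pieces of ${\bf K}h$ pointwise when $dist(x,\pa\Om)\ge r_0$ using the decay of the Stokes kernel; your kernel bound $|\pa_{N(y)}F_{ij}(z,\tau)|\le C(|z|^2+\tau)^{-(n+1)/2}$ and the time integral $\int_0^\infty(r_0^2+s)^{-(n+1)/2}\,ds\le Cr_0^{-(n-1)}$ are both correct.

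The gap is in the final exponent. Because you replace $|x-y|$ by $r_0$ inside the kernel \emph{before} integrating over $\pa\Om$, your argument only yields $|u(x,t)|\le Cr_0^{-(n-1)}\|g\|_{L^\infty(0,T;L^2(\pa\Om))}$. Since $\Om$ is bounded, $r_0$ is bounded above and the only nontrivial regime is $r_0$ small, where $r_0^{-(n-1)}\ge r_0^{-(n-1-\epsilon)}$; so what you prove is strictly \emph{weaker} than the stated estimate, and your closing remark that dropping the $\epsilon$ gives a sharper result has the inequality backwards. The stated bound is recoverable from your own decomposition by keeping the $|x-y|$-decay and applying Cauchy--Schwarz with the $L^2_y(\pa\Om)$ norm of the kernel: since $\sigma\{y\in\pa\Om:|x-y|\le\rho\}\le C\rho^{n-1}$, one has $\bigl(\int_{\pa\Om}|x-y|^{-2(n-1)}d\sigma(y)\bigr)^{1/2}\le Cr_0^{-(n-1)/2}$ for the harmonic piece, while for the caloric piece $\bigl(\int_{\pa\Om}(|x-y|^2+\tau)^{-(n+1)}d\sigma(y)\bigr)^{1/2}\le C(r_0^2+\tau)^{-(n+3)/4}$, and $\int_0^T(r_0^2+\tau)^{-(n+3)/4}d\tau\le Cr_0^{-(n-1)/2}$ because $(n+3)/4>1$. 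This gives $|u(x,t)|\le Cr_0^{-(n-1)/2}\|h\|_{L^\infty(0,T;L^2(\pa\Om))}$, which implies the corollary for every $0<\epsilon\le(n-1)/2$ (and is what lies behind the paper's otherwise unexplained claim that the kernel contributes only $Cr_0^{-(n-1-\epsilon)}$).
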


Now we start the boundary estimate.
Since Stokes equations is translation and rotation invariant, we assume that $\bar{x}=0$ and
$x =(0,x_n),\,\, x_n >0$.
If $x$ is close enough to $\partial\Om$, there is a ball $B_r(0)$ centered at origin and $C^2$ function
 $\Phi :{\bf R}^{n-1} \rightarrow {\bf R}$ such that $\Om\cap B_r(0) = \{ x_n > \Phi(x^{\prime})\} \cap B_r(0)$. Furthermore, $\Phi$ satisfies that
\begin{align}\label{flatboundary}
|\Phi(x')| \leq C |x'|^2,\quad
|\na^{\prime}\Phi (x')|\leq C|x'|, \quad
| \na^{\prime}\na^{\prime}\Phi (x^{\prime})|  \leq C
\end{align}
for $ x^\prime\in B^{'}_r(0)$ and the outward unit normal vector $N(x^{\prime}, \Phi(x'))$ at
$(x^\prime,\Phi(x^\prime))\in \pa \Om \cap B_r(0)$ is
$$
N(x^{\prime},\Phi(x^\prime)) = \frac{1}{ \sqrt{1+|\nabla^{\prime}\Phi(x^\prime)|^2}} (\nabla^\prime \Phi(x^\prime), -1).
$$

We define a transform  $ \mu :  \Om \cap B_r(0)   \rightarrow {\bf R}^{n}_{+} $ such that
$$
\mu(y)=\mu(y^\prime,y_n) = (y^\prime, y_n -\Phi(y^\prime))
$$
and note that $\mu^{-1}(y^\prime,y_n) =  (y^\prime, y_n +\Phi(y^\prime))$. Since our generic point $x$
is $(0,x_n)$, we have $\mu(x)=x$. Hence the Green's matrix
 $G$ on the half space can be transformed to a function  $\mu G$ on
 $\Omega$ such that
$$
\mu G (x,y,t) =  G(\mu(x),\mu (y),t)= G (x^\prime,x_n -\Phi(x^\prime), y^\prime,y_n -\Phi(y^\prime)  ,t)
$$
and satisfies the zero boundary condition
$$
\mu G (x,y^\prime,\Phi(y^\prime),t) =0.
$$
Moreover, the transformed Green's matrix $( \mu G,\mu \theta)$
satisfies a perturbed Stokes equations in $\Om\times (0,T)$
\begin{align*}
& \frac{\partial}{\partial t} ( \mu G)_{ij} -\Delta_{y}( \mu G)_{ij}
+\frac{\partial}{\partial y_j} (\mu \theta)_i \\
& = \delta_{ij}\delta(x-y)\delta(t)
    +D_{y_n}(\mu G)_{ij} \Delta^{\prime}\Phi
         +2   D_{y_k}D_{y_n}( \mu G)_{ij}  D_{y_k}  \Phi \\
    & \qquad  -  D_{y_n}D_{y_n}( \mu G)_{ij} D_{y_k}   \Phi  D_{y_k}   \Phi
    -  D_{y_n}( \mu \theta )_i D_{y_j} \Phi   \\
&= \delta_{ij}\delta(x-y)\delta(t) + R(x,y,t)
\end{align*}
and the solenoidal condition
$$
D_{y_j}  ( \mu G)_{ij} = - D_{y_n}( \mu G )_{ij} D_{y_j} \Phi =S_i(x,y,t).
$$

Therefore, if we let the perturbation
$(J,\eta) = (G^{\Om} - \mu G,\theta^{\Om} - \mu \theta)$, then $(J,\eta)$ satisfies the perturbation equations:
\begin{align}\label{perturbation}
\frac{\partial}{\partial t} J_{ij}(x,y,t)   -&\Delta_{y}   J_{ij}(x,y,t)
+\frac{\partial}{\partial y_j}  \eta_{j}(x,y,t)
= R_{ij}(x,y,t)
\end{align}
\begin{align}\label{perturbation1}
D_{y_j}&  J_{ij} (x,y,t) = S_i(x,y,t),
\end{align}
where ${R}$ is
\begin{align*}
{R}_{ij} = & -D_{{y}_n}({\mu} G)_{ij} \Delta^{\prime}_{{y}}\Phi
- 2   D_{{y}_k}D_{{y}_n}( {\mu} G)_{ij}  D_{{y}_k}  \Phi  \\
&+ D_{{y}_n}D_{{y}_n}( {\mu} G)_{ij} D_{{y}_k}
 \Phi   D_{{y}_k}   \Phi
+  D_{{y}_n}( {\mu} \theta )_i D_{{y}_j} \Phi \\
=&I+II+III+IV.
\end{align*}

We have already discussed the boundedness of velocity $u$ in the interior by double layer potential in $L^2$ theory,  we begin to prove the boundedness near the boundary.

The plan to get $L^1$ bound of the perturbation $J$ of Poisson kernel on $\partial\Omega\times(0,T)$ relies on the
$L^p (0,T;W^{2,p}(\Omega))$ estimate and the trace theorem for it.
Recall that the Poisson kernel is a derivative of Green's matrix and that is the reason that
we need $L^p (0,T;W^{2,p}(\Omega))$ Sobolev type estimate. Therefore, we need to estimate the
$L^p$ norm of ${R}$ in (\ref{perturbation}), $L^p(0,T;W^{1,p}(\Om))$ norm of ${S}$ and $L^p(0,T;W^{-1,p} (\Om))$ norm of ${S}_{t}$
in $(B_1\cap{\Om})\times (0,{T})$ in (\ref{perturbation1}), where $W^{-1,p}(\Om)$ is the dual space of $W^{1,p}(\Om)$.

Since the Green's matrix $G$ is associated with the Gaussian kernel
and the composite kernel $H$, we estimate their derivatives first.
We have
\begin{align*}
|D_{y_n}\Ga(x-y,t)| & \leq \frac{C}{\sqrt{t}^n}\frac{|y-x|}{t}e^{-\frac{|y^\prime|^2+|y_n-x_n|^2}{2t}} \\
|D_{y_n} \Ga(x-y^{*},t)| & \leq\frac{C}{\sqrt{t}^n}\frac{|y^{*}-x|}{t}
e^{-\frac{|y^\prime|^2+|y_n+x_n|^2}{2t} }\\
|D_{y_k}D_{y_n}\Ga(x-y,t)| & \leq \frac{C}{\sqrt{t}^n}\frac{|y-x|^2 }{t^2}e^{-\frac{|y^\prime|^2+|y_n-x_n|^2}{2t}} \\
|D_{y_k}D_{y_n} \Ga(x-y^{*},t)| & \leq\frac{C}{\sqrt{t}^n}\frac{|y^{*} -x|^2 }{t^2}
e^{-\frac{|y^\prime|^2+|y_n+x_n|^2}{2t} }.
\end{align*}

Since  $|D_{y_k} \Phi(y^\prime) | \leq C |y^\prime|$,  $|\Delta^\prime \Phi(y^\prime) | \leq C$ and $x^\prime =0$, we get
\begin{align*}
|D_{y_n}\Ga(x-y,t) \Delta^\prime \Phi(y^\prime)|& \leq
 \frac{C}{\sqrt{t}^{n+1}}\frac{|y-x|}{\sqrt{t}}e^{-\frac{|y^\prime|^2+|y_n-x_n|^2}{2t}} \in
 L^{p}((\Omega\cap B_r) \times (0,T))\\
|D_{y_k}D_{y_n}\Ga(x-y,t) \nabla^\prime \Phi(y^\prime)   |& \leq
 \frac{C}{\sqrt{t}^{n+1}}\frac{|y-x|^3}{\sqrt{t}^3}e^{-\frac{|y^\prime|^2+|y_n-x_n|^2}{2t}} \in
 L^{p}((\Omega\cap B_r) \times (0,T))
\end{align*}
as a function of $y$ for all $p\in [1,\frac{n+2}{n+1})$. In the same way, we have
$$
D_{y_n}\Ga(x-y^{*},t) \Delta^\prime \Phi(y^\prime) ,\quad
 D_{y_k}D_{y_n}\Ga(x-y,t) \nabla^\prime \Phi(y^\prime)  \in
 L^{p}((\Omega\cap B_r) \times (0,T))
$$
as a function of $y$ for all $p\in [1,\frac{n+2}{n+1})$.

Applying (\ref{inequality11}), we have
\begin{align*}
\big|D_{y_n}D_{x_j}\int_{0}^{x_n}\int_{ {\bf R}^{n-1}} D_{x_i} E(x-z)\Ga(z-y^{*},t) dz\big|
& \leq  \frac{C}{t^{\frac12} (|x'-y'|^2 +|y_n +x_n|^2+t )^{\frac12 n} }\\
\big|  D_{y_k} D_{y_n}D_{x_j}\int_{0}^{x_n}\int_{ {\bf R}^{n-1}} D_{x_i} E(x-z)\Ga(z-y^{*},t) dz\big|
 & \leq  \frac{C}{t^{\frac12} (|x'-y'|^2 +|y_n +x_n|^2+t )^{\frac12( n+1)} }.
\end{align*}
Hence, we have for $p\in [1,\frac{n+2}{n+1})$
\begin{align*}
\big|D_{y_n}D_{x_j}\int_{0}^{x_n}\int_{ {\bf R}^{n-1}} D_{x_i}& E(x-z)\Ga(z-y^{*},t) dz
  \Delta^\prime \Phi(y^\prime)\big| \\
& \leq  \frac{C}{\sqrt{t}^{n+1}}\frac{1}{ \sqrt{ (|\frac{|y^{*}-x|^2}{t} +1 )}^{n }} \in  L^{p}((\Omega\cap B_r) \times (0,T))
\end{align*}
\begin{align*}
\big|  D_{y_k} D_{y_n}D_{x_j}\int_{0}^{x_n}\int_{ {\bf R}^{n-1}} D_{x_i}& E(x-z)\Ga(z-y^{*},t) dz
 \nabla^\prime \Phi(y^\prime) \big| \\
 & \leq   \frac{C}{\sqrt{t}^{n+1}}\frac{1}{ \sqrt{ (|\frac{|y^{*}-x|^2}{t} +1 )}^{n }} \in  L^{p}((\Omega\cap B_r) \times (0,T)).
\end{align*}

Although there is a transformation $\mu$ of domain, these estimates imply that $I$, $II$  and $III$ are in
$ L^{p}((\Omega\cap B_r) \times (0,T))$
as a function of $y$ for all $p\in [1,\frac{n+2}{n+1})$.

It remains to get $L^p$ estimate of the pressure kernel $\theta$. For each fixed time $t$, we have
\begin{align} \label{0127}
\notag|IV| \leq& C |y^\prime| |\int_{ {\bf R}^{n-1}}  D_{x_i} E(x'-z', x_n)
\frac{y_n}{t}\frac{1}{\sqrt{t}^n} e^{- \frac{|z^\prime-y^\prime|^2 +y_n^2}{2t}}dz^{\prime}|\\
&+ C |y^\prime| \left| D_{y_n}D_{y_n}\int_{ {\bf R}^{n-1}} \frac{1}{\sqrt{|x^\prime-z^\prime|^2 + x_n^2 }^{n-2}}
\frac{1}{\sqrt{t}^n} e^{- \frac{|z^\prime-y^\prime|^2 +y_n^2}{2t}}dz^{\prime}\right|.
\end{align}
The first term on the right is $L^p$ for $p\in [1,\frac{n+2}{n+1})$
by the Young's convolution inequality
since the kernel $ \frac{x_n}{\sqrt{|z^\prime|^2 + x_n^2 }^{n}}$ has bounded $L^1(  {\bf R}^{n-1})$
estimate as a function of $z^\prime$ independent of $x_n$.
  For the second term, we recall the following proposition by
Solonnikov (Proposition 2.3 in \cite{So1}):
\begin{lemm}\label{0127-2}
 Let M(x, t) be a function defined for $x \in {\bf R}^{n}_{+} $ and $ t > 0$ and
having the properties
$$
M(\lambda x, \lambda^2 t) = \lambda^{m}M(x, t) ,\quad \lambda > 0,
$$
$$
|D^{k}_{x} D^{s}_{t} M(x, t)| \leq C t^{\frac{ m-k-2s}{2}} \exp\left( -\frac{|x|^2}{2t}\right).
$$
Then the integral
$$
J(x, y_n, t) = \int_{{\bf R}^{n-1}} E(y) M(x^\prime-y^\prime,x_n,t) dy^\prime
$$
satisfies the conditions
$$
J(\lambda x, \lambda y_n, \lambda^2 t) = \lambda^{m+1}J(x, y_n, t),
$$
$$
|D^k_{x}D^{l}_{y_n}D^s_{t} J(x,y_n,t) |
\leq C t^{ \frac{m+n-1-2s-k_n}{2} }\left( |x^\prime|^2+(x_n+y_n)^2 +t\right)^{- \frac{|k^{\prime}|+l+n-2}{2} }
e^{ -\frac{x_n^2}{2t}},
$$
where $k = (k_1, . . ., k_n)$ and $|k^\prime|=k_1+\cdot\cdot\cdot+k_{n-1}$.
\end{lemm}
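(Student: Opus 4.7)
The plan is to verify the scaling identity by a direct change of variables and then derive the pointwise bound by differentiating under the integral sign, invoking the hypotheses on $M$ together with standard estimates on derivatives of $E$, and carrying out the $y'$-integration via rescaling and a region split.

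For the scaling, substitute $y' = \lambda z'$ in the defining integral for $J(\lambda x, \lambda y_n, \lambda^2 t)$. Interpreting $E(y)$ as $E(y', y_n)$ with $y_n$ as a free parameter, the Newtonian potential on ${\bf R}^n$ is homogeneous of degree $-(n-2)$, so $E(\lambda z', \lambda y_n) = \lambda^{-(n-2)} E(z', y_n)$; combined with the Jacobian $\lambda^{n-1}$ and the assumed homogeneity of $M$ this produces the overall factor $\lambda^{-(n-2) + (n-1) + m} = \lambda^{m+1}$, as claimed.

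For the derivative estimate, I pass all differentiations under the integral: $D_x^k$ and $D_t^s$ fall on $M$, while $D_{y_n}^l$ acts on $E$, since $y_n$ appears only there. Using the hypothesis on $M$ and the classical bound $|D_{y_n}^l E(y', y_n)| \leq C\,(|y'|^2 + y_n^2)^{-(n-2+l)/2}$, I factor out $e^{-x_n^2/(2t)}$ coming from the Gaussian in $M$ and reduce matters to controlling
\begin{align*}
I(x', y_n, t) = \int_{\Rn} \frac{e^{-|x' - y'|^2/(2t)}}{(|y'|^2 + y_n^2)^{(n-2+l)/2}}\, dy'.
\end{align*}
Rescaling $y' = \sqrt{t}\, z'$ puts $I$ in a dimensionless form, and splitting the $z'$-plane into $|z'| \leq \tfrac12 \max(|x'|/\sqrt{t}, y_n/\sqrt{t}, 1)$ and its complement — with Gaussian decay dominating the far piece and the polynomial profile of $E$ dominating the near one — yields $I \leq C\, t^{(1-l)/2}(|x'|^2 + y_n^2 + t)^{-(n-2+l)/2}$.

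To upgrade $y_n^2$ to $(x_n + y_n)^2$ in the final bound, I exploit the previously extracted $e^{-x_n^2/(2t)}$: in the regime $x_n^2 \gtrsim y_n^2 + t$ this exponential absorbs any discrepancy, so shrinking its constant from $1/(2t)$ to $1/(4t)$ lets me replace $(|x'|^2 + y_n^2 + t)^{-(n-2+l)/2}$ by $(|x'|^2 + (x_n + y_n)^2 + t)^{-(n-2+l)/2}$ up to constants. Finally, the anisotropic split of $k$ into $k'$ and $k_n$ in the exponents reflects that each tangential derivative $D_{x_j}$ with $j < n$ contributes one additional polynomial factor $(|x'|^2 + (x_n + y_n)^2 + t)^{-1/2}$ upon integration, whereas each normal derivative $D_{x_n}$ extracts a $t^{-1/2}$ directly from the $x_n$-Gaussian; tallying the exponents reproduces $t^{(m+n-1-2s-k_n)/2}(|x'|^2 + (x_n+y_n)^2 + t)^{-(|k'|+l+n-2)/2} e^{-x_n^2/(2t)}$. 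I expect the main obstacle to be the careful bookkeeping in the region split defining $I$ together with the replacement $y_n^2 \mapsto (x_n + y_n)^2$; the scaling identity established at the outset provides the key consistency check that the exponents balance correctly.
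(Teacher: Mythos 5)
The paper itself offers no proof of this lemma --- it is quoted verbatim from Proposition 2.3 of \cite{So1} --- so your argument has to stand on its own, and as written it has genuine gaps in the derivative estimate (the scaling identity is fine). First, your intermediate bound $I\le C\,t^{(1-l)/2}(|x'|^2+y_n^2+t)^{-(n-2+l)/2}$ is dimensionally inconsistent: under $(x',y_n,t)\mapsto(\lambda x',\lambda y_n,\lambda^2 t)$ the integral $I$ scales like $\lambda^{1-l}$ while your right-hand side scales like $\lambda^{3-n-2l}$; the prefactor should be $t^{(n-1)/2}$. Even with that correction the bound is false for $l\ge 1$: at $x'=0$ and $0<y_n\ll\sqrt t$ the integral behaves like $y_n^{1-l}$ (logarithmically for $l=1$), because the size bound $|D_{y_n}^l E(y',y_n)|\le C(|y'|^2+y_n^2)^{-(n-2+l)/2}$ is not integrable near $y'=0$ uniformly in $y_n$, whereas the asserted conclusion stays bounded there thanks to $x_n>0$ entering through $(x_n+y_n)$. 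So placing all $y_n$-derivatives on $E$ and using only its modulus bound cannot work; one must exploit cancellation, e.g. harmonicity $D_{y_n}^2E=-\Delta'_{y'}E$ away from the origin, to trade normal derivatives of $E$ for tangential ones that can then be moved by parts onto $M$.

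Second, and more structurally, with all $x$-derivatives placed on $M$ the hypothesis yields only a factor $t^{-1/2}$ per derivative, tangential or normal, so your route gives at best $t^{(m+n-1-|k|-2s)/2}(|x'|^2+(x_n+y_n)^2+t)^{-(n-2+l)/2}$. The asserted estimate is strictly stronger: since $t\le |x'|^2+(x_n+y_n)^2+t$, converting $t^{-|k'|/2}$ into $(|x'|^2+(x_n+y_n)^2+t)^{-|k'|/2}$ is a genuine gain that no bookkeeping of your written pieces can recover. Your closing sentence, that each tangential derivative ``contributes one additional polynomial factor upon integration,'' is precisely the missing step: the mechanism is to write $D_{x_j}M(x'-y',x_n,t)=-D_{y_j}\big(M(x'-y',x_n,t)\big)$ and integrate by parts in $y'$ so that the $|k'|$ tangential derivatives land on $E$, raising the kernel's homogeneity to $n-2+|k'|+l$ and only then producing the extra polynomial decay through the convolution estimate. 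Your upgrade of $y_n^2$ to $(x_n+y_n)^2$ via the extracted Gaussian is fine (at the harmless cost of weakening $e^{-x_n^2/(2t)}$ to $e^{-x_n^2/(4t)}$), but without the integration-by-parts device and the harmonicity trick the stated bound is not reached.
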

See Proposition 2.3 in \cite{So1}.

So, we find the second term  of \eqref{0127} without the transformation $\mu$ is bounded by
\begin{align*}
 C\frac{|y^\prime|}{\sqrt{t} } &
 \left( |\frac{y^\prime}{\sqrt{t}}|^2+(\frac{x_n+y_n}{\sqrt{t}})^2 +1\right)^{- \frac{n}{2} }
\frac{1}{\sqrt{t}^n}e^{ -\frac{y_n^2}{2t}}\\
 & \leq C\big( \frac{r}{\sqrt{t}}+1\big) \frac{|y^\prime|}{\sqrt{t} }
 \left( |\frac{y^\prime}{t}|^2+(\frac{x_n+y_n}{t})^2 +1\right)^{- \frac{n-1}{2} }
\frac{1}{\sqrt{t}^n}e^{ -\frac{y_n^2}{2t}}
\end{align*}
which is in $L^p((\Om\cap B_r)\times(0,T))$ for $p\in (1,\frac{n+2}{n+1})$.

This concludes that $IV$ is in $L^p$ and $R$ in (\ref{perturbation}) is in $L^p$ for all $p\in (1,\frac{n+2}{n+1})$ after adjustment of the domain transformation $\mu$.

To get $L^p(0,T;W^{1,p}(\Om\cap B_r))$ bound of $S$ in (\ref{perturbation1})
we follow a similar program to $R$.
 Indeed, we have
$$
\nabla_y S =  -\nabla_y D_{y_n}( \mu G )_{ij} D_{y_j} \Phi (y^\prime)
-  D_{y_n}( \mu G )_{ij} \nabla_y D_{y_j} \Phi (y^\prime).
$$
The terms in the right hand side have already been considered in the estimates of
$I$, $II$ and $III$ of $R$ except
$D_{y_n} D_{y_n}( \mu G )_{ij} D_{y_j} \Phi (y^\prime)$.
But,  $ D_{y_n} D_{y_n}( \mu G )_{ij}  =
\sum_{1 \leq j \leq n-1} D_{y_k} D_{y_n}( \mu G_{ij}) $ and hence
$D_{y_n} D_{y_n}( \mu G )_{ij} D_{y_j} \Phi (y^\prime)$ has the form of $II$.
Therefore, we get
$$
||S||_{L^p(0,T;W^{1,p}(\Om\cap B_r))} < C \quad\mbox{ independent of} \quad  x.
$$

It remains to find $L^p(0,T;W^{-1,p}(\Om\cap B_r))$ estimate of $D_t S$.
Since $S$ is defined as
$$
S_i(x,y,t) =  - D_{y_n}( \mu G )_{ij} D_{y_j} \Phi (y^\prime)
$$
and $\Phi$ is independent of $y_n$, $L^p (0,T;W^{-1,p}(\Om\cap B_r))$ norm  of $S_t$  is bounded  by
$$
C \int_{0}^{T} \int_{ \Om\cap B_r} | D_t( \mu G) \nabla^{\prime}  \Phi (y^\prime)|^p dy dt
$$
for a constant $C$. By disregarding $\Phi$, we have
\begin{align*}
D_t  G_{ij} &= \delta_{ij}\big( D_t \Ga(x-y,t) -  D_t \Ga(x-y^*,t)\big)\\
& +4(1-\delta_{jn}) D_{x_j} \int_{0}^{x_n} \int_{{\bf R}^{n-1}} D_{x_i} E(x-z)  D_t \Ga(z-y^{*},t) dz
\end{align*}
and
\begin{align*}
|D_t  G_{ij}| &\leq C \left( \frac{1}{\sqrt{t}^{n+2}} +  \frac{|x-y|^2}{\sqrt{t}^{n+4}} \right)
e^{ -\frac{ |x-y|^2}{2t}} +   C \left( \frac{1}{\sqrt{t}^{n+2}} +  \frac{|x-y^{*}|^2}{\sqrt{t}^{n+4}} \right)
e^{ -\frac{ |x-y^{*}|^2}{2t}}\\
& + C\left| D_{x_j} \int_{0}^{x_n} \int_{{\bf R}^{n-1}} D_{x_i} E(x-z)  D_t \Ga(z-y^{*},t) dz\right|
\end{align*}
and Proposition 2.5 in \cite{So1},
we have
$$
\left| D_{x_j} \int_{0}^{x_n} \int_{{\bf R}^{n-1}} D_{x_i} E(x-z)  D_t \Ga(z-y^{*},t) dz\nabla^{\prime}\Phi(y^\prime) \right|
 \leq  \frac{C |y^\prime| }{t  (|x-y^{*}|^2 +t )^{\frac{n}2 } } e^{ - \frac{y_n^2}{2t}}.
$$
This implies that $D_t  G_{ij}\nabla^{\prime}\Phi(y^\prime) \in L^p(( \Om\cap B_r)\times (0,T))$
for all $p\in [1,\frac{n+2}{n+1})$.

Since the estimates of $R$ and $S$ hold only in a small ball near boundary, we need a localization.
For the localization, we take a cut off function $\phi$ such that $\phi =1$ in $B_r$ and $\phi=0$ in the complement of
$B_{2r}$ and we consider $(\phi J, \phi\eta)$ as a solution to the inhomogeneous Stokes equations. We delete the generic point $x$ in the various expressions.
Therefore by Theorem 3.1 in \cite{So1}, we obtain the following lemma for the perturbation $(J,\eta)$.
\begin{lemm}
There is a constant $C$ depending on $r$ and $\Om$ such that
\begin{align*}
|| J||_{L^p(0,T; W^{2,p}(\Om\cap B_r))}& + || \eta||_{L^p(0,T;W^{1,p}(\Om\cap B_r) )} \\
&\leq C(1+ ||G^\Om||_{L^p(0,T; W^{1,p}(\Om\cap(B_{2r}\setminus B_r)))} +
 || \theta^\Om||_{L^p(\Om\cap(B_{2r}\setminus  B_r) \times (0,T))} )
\end{align*}
for all $p \in (1,\frac{n+2}{n+1})$.
\end{lemm}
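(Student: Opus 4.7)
The plan is to apply Solonnikov's $L^p$-regularity theorem for the inhomogeneous Stokes system (Theorem 3.1 in \cite{So1}) to a localized version of the perturbation equations (\ref{perturbation})--(\ref{perturbation1}). With the cutoff $\phi$ chosen so that $\phi \equiv 1$ on $B_r$ and $\phi \equiv 0$ outside $B_{2r}$, I would consider $(\phi J, \phi \eta)$ as a solution, with zero initial and lateral boundary data, to an inhomogeneous Stokes system on $\Om$. A direct computation from (\ref{perturbation})--(\ref{perturbation1}) gives
$$
\partial_t(\phi J_{ij}) - \Delta_y(\phi J_{ij}) + \partial_{y_j}(\phi \eta_i) = \phi R_{ij} - 2\nabla\phi\cdot\nabla_y J_{ij} - (\Delta\phi)\, J_{ij} + (\partial_{y_j}\phi)\, \eta_i,
$$
$$
\partial_{y_j}(\phi J_{ij}) = \phi S_i + (\partial_{y_j}\phi)\, J_{ij},
$$
so Solonnikov's estimate controls $\|\phi J\|_{L^p(0,T;W^{2,p}(\Om))} + \|\phi \eta\|_{L^p(0,T;W^{1,p}(\Om))}$ by the $L^p$ norm of the right-hand side of the first equation together with the $L^p(0,T;W^{1,p}(\Om))$ norm and the $L^p(0,T;W^{-1,p}(\Om))$ norm of the time derivative of the right-hand side of the second. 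Since $\phi \equiv 1$ on $B_r$, the left-hand side dominates the quantities appearing in the lemma.

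The forcings $\phi R$, $\phi S$ and $\phi\,\partial_t S$ are uniformly bounded, independently of the generic point $x$, by the estimates already established in the paragraphs preceding the statement, since those estimates depend only on universal kernel bounds and on the $C^2$ norm of $\Phi$. The commutator terms $\nabla\phi\cdot\nabla J$, $(\Delta\phi)J$, $(\nabla\phi)\eta$ and $(\nabla\phi)\cdot J$ are all supported on the annulus $B_{2r}\setminus B_r$, where I decompose $J = G^\Om - \mu G$ and $\eta = \theta^\Om - \mu\theta$. Since the generic point $x=(0,x_n)$ lies in $B_r$, its image $\mu(x)=x$ stays at a definite positive distance from the annulus; consequently $\mu G$ and its spatial derivatives up to second order, together with $\mu\theta$ and its first spatial derivatives, are bounded on $(B_{2r}\setminus B_r)\times(0,T)$ by constants depending only on $r$ and $\Om$, and their contributions get absorbed into $C$. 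What remains is controlled precisely by $\|G^\Om\|_{L^p(0,T;W^{1,p}(\Om\cap(B_{2r}\setminus B_r)))}$ and $\|\theta^\Om\|_{L^p(\Om\cap(B_{2r}\setminus B_r)\times(0,T))}$, matching the stated bound.

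The main technical difficulty I anticipate is the time derivative of the commutator $(\nabla\phi)\cdot J$ in the divergence identity, since this nominally produces $\partial_t J$ rather than a spatial derivative. To place it in $L^p(0,T;W^{-1,p}(\Om))$, I would use the momentum equation (\ref{perturbation}) to substitute $\partial_t J = \Delta_y J - \nabla_y \eta + R$; testing against $\psi \in W^{1,p'}_0(\Om)$ and integrating by parts against $\psi\,\nabla\phi$ moves one spatial derivative onto the test function, so the pairing reduces to $L^p$ bounds of $\nabla J$, $\eta$ and $R$ on the annulus. By the same decomposition these reduce once again to the two Green's matrix and pressure kernel norms on the right-hand side of the lemma plus the uniform bounds already in hand. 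Tracking the exponent range $p \in (1,\frac{n+2}{n+1})$, which is forced by the $L^p$ integrability of $R$ established earlier, is the only delicate bookkeeping point.
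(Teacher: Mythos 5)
Your proposal follows the same route as the paper: localize with the cutoff $\phi$, regard $(\phi J,\phi\eta)$ as a solution of an inhomogeneous Stokes system whose forcings are $\phi R$, $\phi S$, $\phi S_t$ plus commutator terms supported in $B_{2r}\setminus B_r$, apply Theorem 3.1 of \cite{So1}, and control the annulus terms by writing $J=G^\Om-\mu G$, $\eta=\theta^\Om-\mu\theta$ and using that $\mu G$, $\mu\theta$ are regular there since the singular point $x$ lies in $B_r$. In fact your write-up is more explicit than the paper's one-sentence argument, in particular in handling the time derivative of the divergence commutator $(\nabla\phi)\cdot J$ via the momentum equation, which the paper passes over silently; the reasoning is sound.
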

By the trace theorem in $W^{1,p}(\Om\cap B_r)$, the following lemma also holds.
\begin{lemm}
There is a constant $C$ depending on $r,\Om$ such that
\begin{align*}\label{trace}
||\nabla  J&||_{L^p ( 0,T; W^{1-\frac{1}{p},p}(\partial \Om\cap B_r)) }
+ || \eta||_{L^p(0,T; W^{1-\frac{1}{p},p} (\partial\Om \cap B_r)  } \\
&\leq C(1+ ||G^\Om||_{L^p(0,T; W^{1,p}(\Om\cap(B_{2r}\setminus B_r)))} +
 || \theta^\Om||_{L^p(\Om\cap(B_{2r}\setminus  B_r) \times (0,T))} )
\end{align*}
for all $p \in (1,\frac{n+2}{n+1})$.
\end{lemm}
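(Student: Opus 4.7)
The plan is to apply the standard trace theorem for Sobolev spaces on the $C^2$ domain $\Om\cap B_r$ and then insert the bound produced by the previous lemma. Recall that for any $p>1$, restriction to the boundary defines a bounded operator $W^{1,p}(\Om\cap B_r)\to W^{1-\frac{1}{p},p}(\partial\Om\cap B_r)$, with constant depending only on $p$ and on the fixed $C^2$ geometry of $\Om\cap B_r$; raising this to the $L^p$-in-time level is immediate by Fubini.

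First I would apply this trace theorem, component by component, to $\na J$. Since the previous lemma yields $J\in L^p(0,T;W^{2,p}(\Om\cap B_r))$, each component of $\na J$ belongs to $L^p(0,T;W^{1,p}(\Om\cap B_r))$, and the trace inequality followed by integration in $t$ gives
$$
\|\na J\|_{L^p(0,T;W^{1-\frac{1}{p},p}(\partial\Om\cap B_r))}
\leq C\,\|\na J\|_{L^p(0,T;W^{1,p}(\Om\cap B_r))}
\leq C\,\|J\|_{L^p(0,T;W^{2,p}(\Om\cap B_r))}.
$$
Applying the same trace theorem to the pressure kernel $\eta\in L^p(0,T;W^{1,p}(\Om\cap B_r))$ gives
$$
\|\eta\|_{L^p(0,T;W^{1-\frac{1}{p},p}(\partial\Om\cap B_r))}
\leq C\,\|\eta\|_{L^p(0,T;W^{1,p}(\Om\cap B_r))}.
$$

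Adding these two estimates and invoking the previous lemma on the right-hand sides delivers the claimed inequality for all $p\in(1,\frac{n+2}{n+1})$. No serious obstacle is anticipated, since all of the genuine analytic work has been absorbed into the previous lemma: the flattening transformation $\mu$, the $L^p$ bounds for the perturbation data $R$ and the divergence datum $S$ together with its time derivative via the Solonnikov-type kernel estimate, and the cut-off localization $\phi$ that produces the $(B_{2r}\setminus B_r)$ remainder terms on the right. The only point worth checking is that the trace constant is uniform in the deleted interior reference point $x$; this is automatic, as the trace constant depends only on the fixed $C^2$ chart $\Phi$ of $\partial\Om$ in $B_{2r}$ and on $p$, both of which are independent of $x$.
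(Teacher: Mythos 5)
Your proposal is correct and matches the paper's argument exactly: the paper proves this lemma in one line by citing the trace theorem in $W^{1,p}(\Om\cap B_r)$ applied to the conclusion of the preceding lemma, which is precisely what you do (spelled out component-by-component with the Fubini step in time). Nothing is missing.
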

The generic point $x$ is in $B_r$ and hence the Green's matrix $(G^\Om,\theta^\Om)$ has no singularity in the complement of $B_r$ as a function of $(y,t)$.
Therefore we have that for all $p\in [1,\infty]$
$$
 ||G^\Om||_{L^p(0,T; W^{1,p}(\Om\cap(B_{2r}\setminus B_r)))} +
 || \theta^\Om||_{L^p(\Om\cap(B_{2r}\setminus  B_r) \times (0,T))} \leq C
$$
for a constant $C$ depending only on $p, r$ and $\Om$.

Now we prove our main theorem. The Poisson kernel $K^\Om(x,y,t)$ satisfies
$$
K^\Om(x,y,t) = \frac{\partial}{\partial N(y)} G^\Om(x,y,t) - \theta^\Om(x,y,t)N(y), \quad\mbox{for all }
\quad(x,y,t)\in\Om\times\partial\Om\times(0,T).
$$
We have that
$$
G^\Om = \mu G +  J, \quad \theta^\Om =  \mu \theta  + \eta
$$
and from Lemma 4.3  we know that $\nabla J$ and $\eta$ have bounded
 $L^1( ( \partial \Om\cap B_r) \times  (0,T) )$  norms independent of $x$ since $L^p ( 0,T; W^{1-\frac{1}{p},p}(\partial \Om\cap B_r))$ for $p\in (1,\frac{n+2}{n+1})$ is embedded in $L^1( ( \partial \Om\cap B_r) \times  (0,T) )$.
So we need to consider only $ \frac{\partial}{\partial N(y)}\mu G(x,y,t)$
and $ \mu \theta (x,y,t)$.
The $L^1( ( \partial \Om\cap B_r) \times  (0,T) )$ bound of $ \mu \theta (x,y,t)$ as a function of $y^\prime$ for the generic point $x=(0,x_n)$ can be obtained by Lemma 4.1 after considering coordinate transform $\mu$.
From the definition of the transformation of $\mu$ and the local representation of the boundary $\partial\Om$, we have that for $y=(y^\prime,y_n)=(y^\prime,\Phi(y^\prime)) \in \partial\Om\cap B_r$
\begin{align*}
 \frac{\partial}{\partial N(y)} G(x,y,t) = & \frac{1}{\sqrt{ 1+|\nabla^\prime \Phi(y^\prime)|^2}}
\nabla^{\prime}_{y^\prime} G (x,y^\prime,y_n -\Phi(y^\prime),t ) \cdot\nabla^{\prime}_{y^\prime}\Phi(y^\prime)\\
& - \frac{1}{\sqrt{ 1+|\nabla^\prime \Phi(y^\prime)|^2}}  \frac{\partial}{\partial y_n}  G (x,y^\prime,y_n -\Phi(y^\prime),t )\\
= & \frac{1}{\sqrt{ 1+|\nabla^\prime \Phi(y^\prime)|^2}}
\nabla^{\prime}_{y^\prime} G (x,y^\prime,0,t) \cdot\nabla^{\prime}_{y^\prime}\Phi(y^\prime)\\
& -  \frac{1}{\sqrt{ 1+|\nabla^\prime \Phi(y^\prime)|^2}} \frac{\partial}{\partial y_n}  G (x,y^\prime,0,t).
\end{align*}
Furthermore, we have already proved that the $L_1( ( \partial \Om\cap B_r) \times  (0,T) )$ norm estimate of the first term
$$
\int_{0}^{T}\int_{ \partial\Om \cap B_r}  \left|  \frac{1}{\sqrt{ 1+|\nabla^\prime \Phi(y^\prime)|^2}}
\nabla^{\prime}_{y^\prime} G (x,y^\prime,0,t) \cdot\nabla^{\prime}_{y^\prime}\Phi(y^\prime)\right| dy^\prime dt \leq C
$$
for some $C$ independent of $x$ since $| \nabla^{\prime}\Phi(y^\prime)|\leq c|y^\prime|$.

 By the expression of Poisson kernel $K$ we have
\begin{align*}
 - \frac{\partial}{\partial y_n}  G_{ij} & (x,y^\prime,0,t) =  K_{ij}  (x^\prime-y^\prime,x_n,t) + \delta_{jn} \eta_{i}  (x^\prime-y^\prime,0,t) \\
&=-2\delta_{ij} D_{x_n} \Ga(x^\prime-y^\prime,x_n,t)
+ 4( L_{ij}(x^\prime-y^\prime,x_n,t) - \delta_{jn} B_{in} (x^\prime-y^\prime,x_n,t))\\
&\quad + 4 \delta_{jn} B_{in} (x^\prime-y^\prime,x_n,t) - \delta_{jn}\delta(t) D_{x_i} E(x^\prime-y^\prime,x_n)
\end{align*}
We know already that $-2\delta_{ij} D_{x_n} \Ga(x^\prime-y^\prime,x_n,t)
+ 4( L_{ij}(x^\prime-y^\prime,x_n,t)   - \delta_{jn}B_{in} (x^\prime-y^\prime,x_n,t))$  has $L^1$ bounded norm as a function of $(y^\prime,t)$.
Therefore  in the solution expression for of $u$ we can write
$$
\int_{0}^{t}\int_{\partial\Omega \cap B_r}  \delta_{jn}\delta(t-s) D_{x_i} E(x^\prime-y^\prime,x_n) g_j (y,s) d\sigma_{y}
$$
$$
=\frac{\partial}{\partial x_i} \int_{\partial\Omega \cap B_r}   E(x^\prime-y^\prime,x_n) g_n (y,t) d\sigma_{y}
$$
and
$$
4\int_{0}^{t}\int_{\partial\Omega \cap B_r}  \delta_{jn}  B_{in} (x^\prime-y^\prime,x_n,t-s) g_j (y,s) d\sigma_{y}
$$
$$
= \frac{\partial}{\partial x_i} {\bf T} (g_n)(x,t).
$$
If we denote $e_n=(0,1)$ which is the coordinate vector for $y_n$, we have that the component of boundary data $g_n$ is
$$
g_n = g \cdot  N(y) + g\cdot(e_n -N(y)) \quad\mbox{for}\quad y \in \partial\Om\cap B_r
$$
where $g=(g_1,g_2,\cdot\cdot\cdot,g_n)$ is the boundary data and hence we have
\begin{align*}
\nabla_x  \int_{\partial\Omega \cap B_r}   E(x^\prime &-y^\prime,x_n) g_n (y,t) d\sigma_{y}
=\nabla_x {\bf S}(g\cdot N X^{\partial\Om\cap B_r}) \\
+& \nabla_x  \int_{\partial\Omega \cap B_r}(   E(x^\prime-y^\prime,x_n)
-  E(x^\prime-y^\prime,x_n -\Phi(y^\prime))) g(y,t)\cdot (e_n-N(y))  d\sigma_{y}\\
+& \nabla_x  \int_{\partial\Omega \cap B_r}   E(x^\prime-y^\prime,x_n) g(y,t)\cdot (e_n-N(y))  d\sigma_{y},
\end{align*}
where ${\bf S}$ is the single layer potential operator and  $X$ is the characteristic function. Since $x^\prime =0$ and $\Phi(y^\prime) \leq C |y^\prime|^2$,
$$
 \int_{\partial\Omega \cap B_r} \left|  \nabla_x (   E(x^\prime-y^\prime,x_n)
-  E(x^\prime-y^\prime,x_n -\Phi(y^\prime)))\right| d\sigma_y \leq C.
$$
Then, by observing that
$$
|e_n-N(y)| \leq C |y^\prime |
$$
we find that $\nabla_x  E(x^\prime-y^\prime,x_n) \cdot (e_n-N(y)) $
 has  bounded $L^1$ norm as a function of $y^\prime$ and
we have that
$$
\sup_{x\in \Om\cap B_r}
\left|  \nabla_x  \int_{\partial\Omega \cap B_r}   E(x^\prime-y^\prime,x_n) g(y,t)\cdot (e_n-N(y))  d\sigma_{y}\right| \leq C || g ||_{L_{\infty}(\partial\Om\times (0,T))}.
$$
Similarly we find that $\nabla \kappa(x^\prime-y^\prime,x_n,t)  (e_n-N(y)) $ has a bounded $L^1$ norm as a function of $(y^\prime,t)$ and
we have that
$$
\sup_{(x,t) \in \Om\cap B_r \times (0,T) }
\left|   \int_{0}^{t}  \int_{\partial\Omega \cap B_r}   \nabla\kappa(x^\prime-y^\prime,x_n,t-s) g(y,s)\cdot (e_n-N(y))  d\sigma_{y}ds \right|
$$
$$
 \leq C || g ||_{L^{\infty}(\partial\Om\times (0,T))}.
$$

With the interior $L^\infty$ estimate, localization with the small balls $B_r$ and the preceding kernel estimates of $L^1$ bound,
 we prove our main Theorem 1.1.

For Corollary 1.2, we observe that
$$
|\nabla_x {\bf S}(g\cdot N X^{\partial\Om\cap B_r}) |
\leq C\int_{\Om\cap B_r}  \frac{ 1}{ | y^\prime |^{n-1} } | g(y^{\prime},\Phi(y^\prime),t) \cdot N(y^\prime,\Phi(y^\prime)  | dy^\prime,
$$
where $x = (0,x_n)$.  Similarly, we also have
$$
|\nabla_x {\bf T}(g\cdot N X^{\partial\Om\cap B_r}) |
\leq C\int_0^T \int_{\Om\cap B_r}  \frac{ 1}{ | y^\prime |^{n-1} } | g(y^{\prime},\Phi(y^\prime),t) \cdot N(y^\prime,\Phi(y^\prime))  | dy^\prime dt.
$$
The boundedness follows from the Dini-continuity of $g\cdot N$.

\noindent
$\Box$

\noindent
{\bf Acknowledgment.} The research is supported by KRF2011-028951.

\end{document}